\newtheorem{theorem}{Theorem}[section]
\newtheorem{definition}{Definition}
\newtheorem{proposition}[theorem]{Proposition}
\newtheorem{lemma}[theorem]{Lemma}
\newtheorem{remark}{Remark}[theorem]
\theoremstyle{remark}
\theoremstyle{definition}
\title{Parking on a random rooted binary tree}
\author{Semu Serunjogi}
\date{\vspace{-4ex}}
\begin{document}

\maketitle

\begin{abstract}    
    In this paper, we investigate the parking process on a uniform random rooted binary tree with $n$ vertices. Viewing each vertex as a single parking space, a random number of cars independently arrive at and attempt to park on each vertex one at a time. If a car attempts to park on an occupied vertex, it traverses the unique path on the tree towards the root, parking at the first empty vertex it encounters. If this is not possible, the car exits the tree at the root.

    We shall investigate the limit of the probability of the event that all cars can park when $\lfloor \alpha n \rfloor$ cars arrive, with $\alpha > 0$. We find that there is a phase transition at $\alpha_c = 2 - \sqrt{2}$, with this event having positive limiting probability when $\alpha < \alpha_c$, and the probability tending to 0 as $n \rightarrow \infty$ for $\alpha > \alpha_c$.

    This is analogous to the work done by Goldschmidt and Przykucki \cite{poissongw} and Goldschmidt and Chen \cite{geogw}, while agreeing with the general result proven by Curien and Hénard \cite{flux}.
\end{abstract}

\section{Introduction}

We consider a general directed planar rooted tree $t$ that has $n$ vertices. We shall write $V(t)$ for the vertices for $t$ -- note that we have a unique labeling of the vertices from left to right as $t$ is planar. We may view the vertices of $t$ as parking spaces and imagine that one-by-one $m$ cars arrive independently and uniformly randomly at and attempt to park on the vertices. We define $A: V(t) \rightarrow \{0,1,2,...\}^{n}$ as the number of cars that arrive at each vertex and consider the \emph{arrival process} $A = (A_1, A_2, \ldots, A_n)$ where $m = A_1 + \ldots + A_n$. If a car arrives at an occupied parking space, it proceeds along the unique directed path towards the root of the tree, parking at the first empty space it finds. If the car doesn't encounter an empty space, it leaves the tree.

In the case where $t$ is the path $[n] = \{1, 2, \ldots, n\}$ with edges directed towards 1, let us write $P_i$ for the preferred parking space for the $i$-th car to arrive on $[n]$. We call an arrival process $(P_1, P_2, \ldots, P_m)$ a \emph{parking function} if all of the cars can park on $[n]$.

Konheim and Weiss showed in \cite{line} that for $0 \leq m \leq n$ there are exactly $(n+1-m)(n+1)^{m-1}$ parking functions on $[n]$. If we have $m = \lfloor \alpha n \rfloor$ cars arriving where $\alpha \in (0,1)$, then the probability of $(P_1, P_2, \ldots, P_m)$ being a parking function on $[n]$ is given by

\[\frac{(n+1-m)(n+1)^{m-1}}{n^m} = \left(1-\frac{\lfloor \alpha n \rfloor - 1}{n} \right) \left(1+\frac{1}{n} \right)^{\lfloor \alpha n \rfloor} \rightarrow (1-\alpha)e^\alpha\ \]
as $n \rightarrow \infty$. Interestingly, this limiting probability is strictly positive for all $\alpha \in (0,1)$.

Returning to a general rooted tree $t$ of size $n$, we find that conditional on there being $m = \lfloor \alpha n \rfloor$ car arrivals, the arrival process $A = (A_1, A_2, \ldots, A_n)$ has a $Multinomial(m; 1/n, \ldots, 1/n)$ distribution. This converges in distribution to the Poisson distribution $Po(\alpha)$ as $n \rightarrow \infty$. This fact will later allow us to replace the tree-wide multinomial car arrival process with a local Poisson arrival process at each vertex.

The problem of parking on a range of different trees has been explored frequently in recent times. Lackner and Panholzer \cite{phase} investigated parking on a uniform random rooted labelled tree of size $n$. They showed that when $\lfloor \alpha n \rfloor$ cars arrive independently at uniformly random vertices, the limiting probability of all the cars being able to park on the tree experiences a discontinuous phase transition as we vary $\alpha$ past $1/2$. Goldschmidt and Przykucki were able to find the same results using a probabilistic method in \cite{poissongw}. Goldschmidt and Chen \cite{geogw} then considered the case of parking on a uniform random rooted plane tree, again finding a phase transition in the asymptotic parking probability as $\alpha$ varied. Jones \cite{binarygw} explored a model for the rainfall runoff from hillsides, where each tree vertex represents locations on the hill with a certain  capacity to hold water, rainwater runs down the hill according to the directed tree edges, and the probability of water reaching the bottom of the hill is considered. In \cite{flux}, Curien and Hénard were able to characterise the behaviour of this phase transition for general arrival and tree distributions. In this paper, we shall consider parking on a uniform random rooted binary tree with the same $Po(\alpha)$ car arrival distribution as in \cite{poissongw} and \cite{geogw}, and using the objective method to analyse this problem. We shall also demonstrate that our answer agrees with the general case.

We now formulate our binary tree model. Let $\mathds{V} = \bigcup_{n \geq 0} \{1,2 \}^n$ be an infinite vertex set, where $\{1,2 \}^0 = \emptyset$ by convention. We write vertices as $v = v_1 v_2 \ldots v_n$ (with $v = \emptyset$ if $n = 0$). Then $p(v) = v_1 v_2 \ldots v_{n-1}$ is the \emph{parent} of $v \neq \emptyset$, while $v1, v2$ are the \emph{children} of $v$.

\begin{definition}
    We say that a subset $t \subseteq \mathds{V}$ is a \emph{rooted binary tree} if
    \begin{enumerate}
        \item $\emptyset \in t$
        \item $v \in t \implies p(v) \in t$
        \item for every $v \in t$, there exists $0 \leq c(v) \leq 2$ such that if $c(v)= 0$ then $v1, v2 \notin t$, and if $c(v) > 0$ then $vi \in t$ if and only if $1 \leq i \leq c(v)$.
    \end{enumerate}
We take the edges of $t$ to be directed from $v$ to $p(v)$.
\end{definition}

Now we denote the set of rooted binary trees by $\mathcal{T}$. Let $\mathcal{T}_n$ be the subset of $\mathcal{T}$ containing all trees $t \in \mathcal{T}$ of size $n$. It is known that $\mathcal{T}_n$ is finite with a cardinality of $C_n = \frac{1}{n} \binom{2n}{n}$, the $n$-th Catalan number. From now on, we shall write $T$ and $T_n$ to denote uniformly random elements of $\mathcal{T}$ and $\mathcal{T}_n$, respectively.

We shall write Bin(2, 1/2) for the binomial distribution with two independent trials, each with success probability $1/2$, and Po($\lambda$) for the Poisson distribution with mean $\lambda > 0$. We write BGW(2,1/2) for the law of the family tree of a Galton-Watson branching process with Bin(2, 1/2) offspring distribution. Fix a tree $t \in \mathcal{T}_n$. Then the BGW(2,1/2) model generates $t$ with probability $\Pi_{v \in t} 2^{-c(v)-1}$. As $\Sigma_{v \in t} c(v) = n - 1$ (since $t$ has size $n$), this probability simplifies to $2^{1 - 2n}$. This probability is the same for each tree in $\mathcal{T}_n$, hence we can conclude that $T_n$ has the same distribution as a BGW(2,1/2) tree conditioned to have $n$ vertices.

In order to analyse the asymptotic probability of all the cars being able to park on $T_n$, we must consider $T_\infty$ which is $T$ conditioned to have $n$ vertices and we let $n \rightarrow \infty$. This is known as the \emph{local weak limit} of the family of trees $(T_n)^{\infty}_{n = 1}$.

\section{Relation to existing work}
First, we shall verify the following result due to Curien and Hénard in \cite{flux} for the case of the Galton-Watson (GW) trees generating uniform binary trees:
\begin{theorem}[Curien and Hénard \cite{flux}]\label{flux phi}
    Let $T$ be a GW tree with offspring distribution $\nu$. Consider the parking process of cars arriving independently at each vertex of $T$ according to the distribution $\gamma$. If $\mu \leq 1$ and $\sigma^2$ respectively are the mean and variance of $\gamma$, and if $\Sigma^2$ is the variance of the critical (mean 1) offspring distribution $\nu$, we let
    \begin{equation}\label{Phi}
        \Phi \coloneq (1-\mu)^2-\Sigma^2(\sigma^2+\mu^2-\mu).
    \end{equation}
    We then have
    \[
        \phi(T_n) \quad \begin{cases}
    \text{converges in distribution} & \Phi>0 \\ \rightarrow\infty, 
    \text{ but is } o_\mathds{P}(n) & \Phi=0 \\ \approx cn & \Phi<0
    \end{cases} \quad \text{ as } n \rightarrow \infty
    \]
    where $\phi$ is defined as the flux of the parking process on the tree, i.e. the number of cars leaving the tree through the root and $c=c(\gamma,\nu) \in (0,\mu)$ is a deterministic number.
\end{theorem}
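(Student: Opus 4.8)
The statement of Theorem~\ref{flux phi} is taken verbatim from \cite{flux}, so rather than reprove it in full generality the plan is to (i) recall the mechanism responsible for the trichotomy and for the quantity $\Phi$, and (ii) instantiate it in the setting relevant to this paper. The starting point is the elementary recursive description of the flux: if a finite rooted tree $t$ has root receiving $A_\emptyset$ cars and root-subtrees $t_1,\dots,t_k$, then
\begin{equation*}
    \phi(t) = \Bigl(A_\emptyset + \sum_{j=1}^{k} \phi(t_j) - 1\Bigr)^{+},
\end{equation*}
because the root absorbs at most one car and everything in excess is forwarded towards it; on a Bienaym\'e--Galton--Watson tree with critical offspring law $\nu$ and i.i.d.\ $\gamma$-arrivals this becomes a recursive distributional equation.

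Next I would pass to the local weak limit, which is Kesten's size-biased tree: an infinite spine $u_0 = \emptyset, u_1, u_2, \dots$ along which each vertex has offspring distributed as the size-biased law $\widehat\nu$, one (uniformly chosen) child continuing the spine and the others rooting independent $\nu$-BGW trees. Iterating the flux recursion along the spine turns $\phi$ into a Lindley-type reflected random walk: writing $F_i$ for the number of cars crossing the edge $u_i \to u_{i-1}$, one gets $F_i = (F_{i+1} + D_i)^{+}$ with i.i.d.\ increments
\begin{equation*}
    D_i = (\text{direct arrivals at } u_i) + (\text{total flux into } u_i \text{ from its non-spine bushes}) - 1 .
\end{equation*}
Its drift is $\mathbb E[D_i] = \mu + \Sigma^2 \beta - 1$, where $\mu$ is the mean of $\gamma$, where $\Sigma^2 = \mathrm{Var}(\nu)$ is the expected number of non-spine bushes hanging off a spine vertex (so the size-biased law has mean $\Sigma^2 + 1$), and where $\beta = \beta(\gamma,\nu)$ is the expected flux out of the root of one such bush. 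Loynes-type reflected-random-walk theory then gives the three regimes directly: $F_0$ has a proper stationary law when $\mathbb E[D_i] < 0$, is null-recurrent and grows sublinearly when $\mathbb E[D_i] = 0$, and grows linearly when $\mathbb E[D_i] > 0$; transferring this back to $T_n$ reproduces the three cases of the theorem, with $\phi(T_n)$ of order $n$ in the overloaded regime since a positive proportion of the $\Theta(n)$ cars must then leave.

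The genuinely delicate point — and the one I expect to be the main obstacle — is to show that the sign of $\mathbb E[D_i] = \mu + \Sigma^2 \beta - 1$ equals the sign of $\Phi = (1-\mu)^2 - \Sigma^2(\sigma^2 + \mu^2 - \mu)$; equivalently, that the critical value $\beta = (1-\mu)/\Sigma^2$ is attained exactly on the locus $\{\Phi = 0\}$. First moments do not suffice here: taking expectations in the recursive distributional equation only yields the identity $\mathbb P(\text{the root of a bush is never visited}) = 1-\mu$, and at criticality the corresponding relation for $\beta$ degenerates to $0 = 0$. One must instead analyse the fixed-point (generating-function) equation for the law of the bush-flux, in which $\Phi$ appears as the discriminant of the quadratic that governs the equation near its critical point; this is the part carried out in \cite{flux}, which I would invoke rather than reconstruct.

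Finally I would specialise to our setting. For uniform binary trees the offspring law is $\mathrm{Bin}(2,1/2)$, which is already critical with $\Sigma^2 = 1/2$, while the car law $\mathrm{Po}(\alpha)$ has $\mu = \sigma^2 = \alpha$; hence
\begin{equation*}
    \Phi = (1-\alpha)^2 - \tfrac12\bigl(\alpha + \alpha^2 - \alpha\bigr) = (1-\alpha)^2 - \tfrac12\alpha^2 = \tfrac12\bigl(\sqrt2(1-\alpha) - \alpha\bigr)\bigl(\sqrt2(1-\alpha) + \alpha\bigr),
\end{equation*}
which is strictly positive for $\alpha$ below, and strictly negative for $\alpha$ above, the unique root of $\Phi$ in $(0,1)$, namely $\alpha = \sqrt2/(1+\sqrt2) = 2 - \sqrt2$. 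Thus Theorem~\ref{flux phi} predicts a phase transition at $\alpha_c = 2-\sqrt2$, in agreement with what the rest of the paper establishes by the objective method.
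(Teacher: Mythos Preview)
Your proposal is appropriate: the paper does not prove Theorem~\ref{flux phi} at all but simply quotes it from \cite{flux} and then specialises by plugging $\mu=\sigma^2=\alpha$ and $\Sigma^2=\tfrac12$ into \eqref{Phi} to obtain $\Phi=\tfrac12(2-4\alpha+\alpha^2)$ with its root $\alpha_c=2-\sqrt2$, which is exactly your step~(ii). Your step~(i), the heuristic sketch of the spine/Lindley mechanism behind the trichotomy, is extra exposition the paper omits; it is a reasonable summary (and you are right to flag the second-moment identification of $\Phi$ as the non-trivial part and to defer it to \cite{flux}), so nothing is missing relative to the paper's own treatment.
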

In this paper, we have considered the parking process where $\gamma$ and $\nu$ follow the $Po(\alpha)$ and $Bin(2, 1/2)$ distributions, respectively. In this case, $\mu = \sigma^2 = \alpha$, while $\Sigma^2 = \frac{1}{2}$. Simplifying (\ref{Phi}) gives $\Phi = \frac{1}{2}(2 - 4\alpha + \alpha^2)$. If we include the restriction that $\alpha \leq 1$, then applying Theorem \ref{flux phi} allows us to conclude that the limit of the flux $\phi(T_n)$ as $n\rightarrow\infty$ undergoes a phase transition from being bounded to unbounded as $\alpha \leq 1$ increases past $2 - \sqrt{2}$. We also notice that $X$ must be \emph{stochastically increasing} in $\alpha$, hence the flux is also almost surely unbounded for $\alpha > 1$.

In this paper, our combination of offspring distribution and car arrival distribution allows us to develop a complete characterisation of the distribution of the parking process on $T_n$, which the general result of Curien and Hénard does not provide. Our method of analysis will closely follow that of \cite{poissongw} and \cite{geogw}, in which the cases of parking on uniform random rooted Cayley trees and uniform random rooted plane trees, respectively, were explored.

\subsection{Construction of $T_\infty$}
Given a parking process $A$ on a random rooted plane tree $t$, we shall assume that the car arrivals at each vertex are independent and obey the law $P \sim \gamma$, where $\gamma$ has mean $\mu$ and variance $\sigma^2 < \infty$. From now on we shall focus on critical Galton-Watson trees with each ancestor independently having $N \sim \nu$ offspring, where $\nu$ has mean 1 (hence critical) and variance $\Sigma^2 < \infty$.

By Kesten \cite{spinaldecomp}, we have the following method to construct the local weak limit tree $T_\infty$: Firstly, define the \emph{size-biased} distribution of $\nu$ shifted down by one as $\tilde{\nu}$ where
\[\tilde{\nu}(i) \coloneq (i+1)\nu(i+1) \quad \text{for} \quad i \geq 0.\]
Note that this defines a distribution of mean $\Sigma^2$ since the mean of $\nu$ is 1.

Now we let the infinite directed path $S = \{S_0, S_1, S_2, ...\}$ rooted at $S_0$ be the spine. We independently add $\tilde{N} \sim \tilde{\nu}$ offspring to each of the vertices $S_i$ on the spine. Finally, we independently attach a GW tree with offspring distribution $\nu$ to each of these neighbours of the infinite path. Kesten states that this construction does indeed give a tree with the same distribution as $T_\infty$ and that we have $T_n \stackrel{d}{\rightarrow} T_\infty$ as $n \rightarrow \infty$.

We can analyse the parking process on $T_\infty$ by first completing the parking process on the independent GW trees attached to the spine $S$, then parking any overflow and the remaining arriving cars on the spine. After parking on the trees attached to the spine, we count the number of cars that arrive at $S_h$ -- the $h$-th vertex on the spine -- coming from the GW trees with distribution $\nu$ attached to it, as well as any  cars that arrive initially at $S_h$ through the arrival process. By the construction of $T_\infty$ above, these quantities are i.i.d for each vertex $S_h$ on the spine, and are distributed according to the law of the random variable $Y$ given by
\begin{equation}\label{RDE for Y}
    Y \stackrel{d}{=} P+\sum_{i=1}^{\tilde{N}}X_i
\end{equation}
where $\tilde{N} \sim \tilde{\nu}, X_i \sim \phi(T), P \sim \gamma$ are all independent. Taking expectations in (\ref{RDE for Y}), we have $\mathds{E}[Y] = \Sigma^2 \mathds{E}[X_i] + \mu$.


\section{Main results}
We are now ready to state the main results of this paper.
\begin{theorem}\label{Thm: limit of prob A_n}
Suppose $\lfloor \alpha n \rfloor$ cars independently arrive and attempt to park at a uniform random vertex of the BGW(2,1/2) tree $T_n$ of size n.
Let $A_{n, \alpha}$ be the event that all $\lfloor \alpha n \rfloor$ of the arriving cars can park on $T_n$. Then
    \[\lim_{n \rightarrow \infty} \mathds{P}(A_{n, \alpha}) = \begin{cases}
        e^{\alpha/2} \sqrt{\frac{\alpha^2 - 4\alpha + 2}{2(1-\alpha)}} & 0 \leq \alpha \leq 2-\sqrt{2} \\
        0 & \alpha > 2-\sqrt{2}.
    \end{cases}\]
\end{theorem}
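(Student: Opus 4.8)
The plan is to analyze the parking process on the local weak limit $T_\infty$ via the recursive distributional equation (RDE) in (\ref{RDE for Y}), then transfer the result back to $T_n$ using a generating-function / singularity-analysis argument. Concretely, write $X = \phi(T)$ for the flux of the parking process on an (unconditioned) $\mathrm{BGW}(2,1/2)$ tree with $\mathrm{Po}(\alpha)$ arrivals. The flux $X$ satisfies its own RDE: at the root of $T$, the number of cars wanting to use the root is $P + X_1 + X_2$ where $P \sim \mathrm{Po}(\alpha)$, $X_1, X_2$ are independent copies of $X$ corresponding to the two potential subtrees (present independently with probability $1/2$ each — equivalently, the offspring number is $\mathrm{Bin}(2,1/2)$), so that $X \stackrel{d}{=} (P + X_1 \mathds{1}_{\{c(\emptyset)\ge 1\}} + X_2 \mathds{1}_{\{c(\emptyset)=2\}} - 1)^+$. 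The first step is to solve this RDE: I would introduce the probability generating function $f(s) = \mathds{E}[s^X]$, or more conveniently work with $\mathds{P}(X = 0)$ and the full law, exploiting that $\mathrm{Po}(\alpha)$ sums are again Poisson so the distribution of $P + \sum X_i$ before the $(\cdot - 1)^+$ truncation has an explicit form. The truncation $(\cdot-1)^+$ makes this a Lindley-type recursion, so I expect the stationary law of $X$ to be expressible through the solution of a fixed-point equation for $\mathds{E}[s^X]$; the critical threshold $\alpha_c = 2-\sqrt 2$ should emerge exactly as the point where this fixed point ceases to give a proper (mass-1, or finite-mean) distribution, matching $\Phi = \tfrac12(\alpha^2 - 4\alpha + 2) = 0$ from Theorem~\ref{flux phi}.

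Next I would assemble the event $A_{n,\alpha}$ on $T_\infty$. By Kesten's construction, all cars park on $T_\infty$ iff (i) all the $\mathrm{BGW}(2,1/2)$ bushes hanging off the spine absorb their cars with zero flux is \emph{not} required — rather, the fluxes $Y_h$ entering each spine vertex $S_h$ are i.i.d.\ copies of $Y$ in (\ref{RDE for Y}), and all cars park iff the spine (a one-way infinite path directed at $S_0$, which acts as a sink with the cars allowed to leave) never overflows... actually on $T_\infty$ the relevant event must be reformulated: following \cite{poissongw} and \cite{geogw}, the limiting probability $\lim_n \mathds{P}(A_{n,\alpha})$ is \emph{not} simply a probability on $T_\infty$ but is recovered by a generating-function identity. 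So the core of the argument is combinatorial/analytic: let $B_n$ be the number of pairs $(t, A)$ with $t \in \mathcal{T}_n$ and $A$ an arrival configuration of total mass $m = \lfloor \alpha n\rfloor$ under which all cars park, weighted appropriately; equivalently set up the bivariate generating function $F(x,s) = \sum_{t} x^{|t|} \mathds{E}[s^{(\text{flux})} \mathds{1}_{\text{all park in } t}]$ or the "fully parked" generating function, derive a functional equation for it from the binary recursive structure of $\mathcal{T}$ (a tree is a root with $0$, $1$, or $2$ subtrees, plus its $\mathrm{Po}(\alpha)$ arrivals), and solve it. Then $\mathds{P}(A_{n,\alpha}) = [x^n] F_{\text{parked}}(x) \big/ [x^n] F_{\text{all}}(x)$ where the denominator is the generating function $\sum_n C_n (\text{normalization}) x^n$; singularity analysis of both numerator and denominator near their dominant singularities yields the limit.

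The key steps in order: (1) derive and solve the flux RDE for $X = \phi(T)$, obtaining $\mathds{E}[s^X]$ and in particular verifying $\mathds{E}[X] < \infty$ iff $\alpha < 2-\sqrt2$; (2) derive the functional equation for the generating function $G(x) = \sum_{n\ge 1} g_n x^n$ where $g_n = \sum_{t \in \mathcal{T}_n} 2^{1-2n}\,\mathds{E}[e^{-\alpha}\cdots]$ encodes the probability-weighted count of fully-parked configurations (using that a $\mathrm{Po}(\alpha)$ arrival at the root contributes a factor, that each of up to two children recursively contributes a subtree that is fully parked \emph{with a certain residual capacity at its root passed up}, so the right object is a two-variable series tracking "remaining empty slots reachable from the root" or equivalently the flux); (3) solve this functional equation — it should be algebraic, of degree related to the binary branching, giving $G$ explicitly or implicitly via a quadratic; (4) locate the dominant singularity $\rho(\alpha)$ of $G$ and of the reference series $\sum C_n (2x/4)^n \sim (1-x)^{-1/2}$-type, and extract $[x^n]G / [x^n](\text{ref})$; (5) show the ratio converges to $e^{\alpha/2}\sqrt{(\alpha^2-4\alpha+2)/(2(1-\alpha))}$ for $\alpha \le 2-\sqrt2$, with the square-root factor coming from comparing the coefficient asymptotics at coinciding singularities, and the $e^{\alpha/2}$ from the Poisson normalization $e^{-\alpha}$ accumulated over $\approx n/2$ non-leaf... (tracking constants carefully); and for $\alpha > 2-\sqrt2$, show the singularity of $G$ is strictly inside that of the reference series (or the relevant branch disappears), forcing the ratio to $0$.

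The main obstacle I anticipate is step (2)–(3): setting up the \emph{correct} generating function that simultaneously (a) is amenable to a closed-form functional equation from the binary recursion, and (b) has the property that the coefficient ratio gives exactly $\mathds{P}(A_{n,\alpha})$ — this requires carefully choosing the catalytic variable (most likely the flux, or "height of the empty stack at the root", matching $Y$ in (\ref{RDE for Y})), and the bookkeeping of the $\mathrm{Po}(\alpha)$ arrival weights and the $2^{1-2n}$ $\mathrm{BGW}$ weights so that the $\mathrm{Bin}(2,1/2)$ conditioning is correctly absorbed. A secondary difficulty is the singularity analysis when $\alpha = 2 - \sqrt 2$ exactly, and confirming that the phase transition in the analytic picture (confluence of singularities, or a branch point colliding with the square-root singularity of the Catalan series) lines up precisely with $\Phi = 0$; the transfer theorems of Flajolet–Odlyzko should handle the extraction once the functional equation is solved, and the $\alpha>\alpha_c$ case should follow because the relevant generating function's radius of convergence drops below $1/4$ (in the natural normalization), killing the ratio.
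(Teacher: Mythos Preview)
Your proposal takes a genuinely different route from the paper, and the divergence stems from a misconception. You write that ``the limiting probability $\lim_n \mathds{P}(A_{n,\alpha})$ is \emph{not} simply a probability on $T_\infty$'' and therefore pivot to an analytic-combinatorics programme (bivariate generating functions, functional equations, singularity analysis). In the paper this identification \emph{is} exactly what happens: Theorem~\ref{Thm: prob of A_alpha} computes $\mathds{P}(A_\alpha)$ directly on $T_\infty$, and the proof of Theorem~\ref{Thm: limit of prob A_n} is then a one-paragraph transfer, citing the monotone coupling of $(T_n)_{n\ge 1}$ due to Luczak--Winkler \cite{coupling} and reusing verbatim the argument of Goldschmidt--Przykucki \cite{poissongw} that, given such a coupling, $\mathds{P}(A_{n,\alpha}) \to \mathds{P}(A_\alpha)$.

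For Theorem~\ref{Thm: prob of A_alpha} itself, the paper again avoids singularity analysis entirely. Your step~(1) --- solving the RDE for the flux $X$ --- is indeed carried out (via the quadratic for $G(s)=\mathds{E}[s^X]$, extracting $p=\mathds{P}(X=0)$ and $\mathds{E}[X]$ and locating the phase transition at $2-\sqrt{2}$ as the point where the discriminant $g(s)$ acquires an interior zero). But then, rather than your steps~(2)--(5), the paper simply observes that all cars park on $T_\infty$ iff the spine random walk $C_n = n - \sum_{i\le n} Y_i$ stays nonnegative forever, and applies the skip-free random-walk identity $\mathds{P}(C_n \ge 0 \ \forall n) = \mathds{E}[1-Y]/\mathds{P}(Y=0)$ (Lemma~\ref{RW}). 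Plugging in $\mathds{E}[Y]$ and $\mathds{P}(Y=0)$, both computed from $p$ and $\mathds{E}[X]$, gives the closed form directly; the supercritical case follows by stochastic monotonicity in $\alpha$.

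Your analytic-combinatorics route is closer in spirit to Lackner--Panholzer \cite{phase} and could in principle be pushed through, but it is substantially heavier, and your outline leaves its genuinely hard parts (the correct catalytic variable, matching singularities, the boundary case $\alpha=\alpha_c$) as acknowledged obstacles. The paper's probabilistic argument sidesteps all of them.
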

\begin{remark}
    We see from this theorem that there is a phase transition in the behaviour of this probability at the critical $\alpha$ value $\alpha_c = 2-\sqrt{2}$.
\end{remark}
\begin{theorem}\label{Thm: pgf of X}
    Let X be the number of cars that visit the root of a BGW(2,1/2) tree with, for some $\alpha > 0$, an independent $Po(\alpha)$ number of cars initially choosing each vertex. Write $p \coloneq \mathds{P}(X=0)$.
    \begin{enumerate}
        \item For $0 \leq \alpha \leq 2 - \sqrt{2}$, the probability generating function of X is
        \[G(s) = 1 - \alpha + s(2 - \alpha) + 2s^2 e^{\alpha (1-s)} + 2s e^{\alpha (1-s)} \sqrt{s^2 - e^{\alpha (s-1)} (1 - \alpha + s(2 - \alpha))}.\]
        Furthermore, $p = 1 - \alpha$ and $\mathds{E}[X] = 2 - \alpha - \sqrt{2(\alpha^2 - 4\alpha + 2)}$.
        \item For $\alpha > 2 - \sqrt{2}$, the probability generating function of X is
        \begin{align*}
            G(s) &= p + s(1-p) + 2s^2 e^{\alpha (1-s)} \\
            &+ \begin{cases} 
             2s e^{\alpha (1-s)} \sqrt{s^2 - e^{\alpha (s-1)} (p + s(1+p))}, & 0 \leq s \leq s_p \\
            - 2s e^{\alpha (1-s)} \sqrt{s^2 - e^{\alpha (s-1)} (p + s(1+p))}, & s_p < s \leq 1,
            \end{cases}
        \end{align*}
        where $s_p = \frac{1 + (1+\alpha)p - \sqrt{(1 + (1+\alpha)p)^2 - 8\alpha p(1+p)}}{2\alpha(1+p)}$. Furthermore, $p > 1-\alpha$ and $\mathds{E}[X] = \infty$.
        \item If $\alpha > 2 - \sqrt{2}$, then $\frac{e^{-\alpha}}{4} \lor (1 - \alpha) < p < c(\alpha)$ where
        \[
        c(\alpha) \coloneq 
        \begin{cases}
            \frac{(3 - 2\sqrt{2})\alpha - 1}{\alpha^2 - 6\alpha + 1} & \alpha \neq 3 + 2\sqrt{2} \\
            \frac{3\sqrt{2} - 4}{8} & \alpha = 3 + 2\sqrt{2}.
        \end{cases}
        \]
    \end{enumerate}
\end{theorem}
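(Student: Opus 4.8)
The plan is to turn the branching structure of the BGW$(2,1/2)$ tree into a recursive distributional equation for $X$, convert this into a functional equation for $G(s)=\mathds{E}[s^X]$, solve the resulting quadratic, and then use the fact that $G$ must be a genuine probability generating function to select the correct branch and the value $p=G(0)$. First I would set up the recursion: conditioning on the root having $N\sim\mathrm{Bin}(2,1/2)$ children, on the $P\sim\mathrm{Po}(\alpha)$ cars arriving directly at the root, and attaching an independent copy of the whole tree at each child, a car visits the root exactly when it arrives there or overflows from a child subtree, and the number overflowing from child $i$ is $(X_i-1)^+$ with $X_i$ an independent copy of $X$. Hence $X \stackrel{d}{=} P+\sum_{i=1}^{N}(X_i-1)^+$ (and the flux out of the root is $(X-1)^+$). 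Since $\mathds{E}\bigl[s^{(X-1)^+}\bigr]=p+\tfrac{G(s)-p}{s}$, the Poisson and binomial generating functions give
\[
  4s^2 G(s) = e^{\alpha(s-1)}\bigl(G(s)+(1+p)s-p\bigr)^2 ,
\]
and, writing $\kappa(s)=(1+p)s-p$ and $D(s)=s^2-e^{\alpha(s-1)}\kappa(s)$, solving the quadratic yields
\[
  G(s) = 2s^2 e^{\alpha(1-s)} - \kappa(s) \ \pm\ 2s\,e^{\alpha(1-s)}\sqrt{D(s)} ,
\]
the shape of the formulas in the statement, with the two sign choices matching the two regimes. I would record the elementary facts $D(1)=0$, $D(0)=e^{-\alpha}p>0$, $D'(1)=1-\alpha-p$, that $D(s)$ is strictly increasing in $p$ for each fixed $s\in[0,1)$, and that when $p=1-\alpha$ one has $D''(1)=\alpha^2-4\alpha+2$.

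Next comes the heart of the argument: pinning down the branch and $p$. Since $G$ is an honest PGF it is real on $[0,1]$ (forcing $D\ge 0$ there), bounded by $1$, increasing and convex. Near $s=0$ only the $+$ sign gives $G'(0)=\mathds{P}(X=1)\ge 0$; near $s=1$, if $D'(1)<0$ then $\sqrt{D(s)}\sim\sqrt{|D'(1)|(1-s)}$ dominates the smooth part and the $+$ branch would force $G>1$, so the $-$ branch is needed. Nonnegativity of $D$ near $1$ forces $D'(1)\le0$, i.e.\ $p\ge1-\alpha$. If $p=1-\alpha$ then $D'(1)=0$ and $D''(1)=\alpha^2-4\alpha+2$: for $\alpha\le 2-\sqrt2$ this is $\ge0$, so $s=1$ is a double root, the $+$ branch is admissible throughout, and a (routine but not trivial) calculus check should confirm $D\ge0$ on all of $[0,1]$ — this is case~1 with $p=1-\alpha$. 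For $\alpha>2-\sqrt2$, $D''(1)<0$, so $p=1-\alpha$ makes $D<0$ just left of $1$; increasing $p$ shrinks the bad set until, at a unique $p>1-\alpha$, it collapses to a double root $s_p\in(0,1)$ where the two branches agree and $G$ may switch from $+$ (near $0$) to $-$ (near $1$) — this is case~2, and solving $D(s_p)=D'(s_p)=0$ produces the stated $s_p$. The mean is $G'(1^-)$: in case~1 the double root gives $\sqrt{D(s)}\sim\sqrt{\tfrac{\alpha^2-4\alpha+2}{2}}\,(1-s)$ and a short expansion gives $\mathds{E}[X]=2-\alpha-\sqrt{2(\alpha^2-4\alpha+2)}$; in case~2 the root at $1$ is simple (as $D'(1)=1-\alpha-p<0$), so $G'(1^-)=\infty$.

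For the bounds in part~3 ($\alpha>2-\sqrt2$): the previous step excludes $p=1-\alpha$, and $p<1-\alpha$ is excluded since then $D'(1)>0$, so $p>1-\alpha$; also $\{N=0,\,P=0\}\subseteq\{X=0\}$ has probability $\tfrac14 e^{-\alpha}$ while other configurations (e.g.\ $N=1$, $P=0$, the single subtree contributing at most one visit) also force $X=0$, so $p>\tfrac14 e^{-\alpha}$ — giving the claimed lower bound. For the upper bound I would use that $s_p$ is a \emph{minimum} of $D$, so $D''(s_p)\ge0$; substituting the relations $e^{\alpha(s_p-1)}\kappa(s_p)=s_p^2$ and $e^{\alpha(s_p-1)}(1+p)=s_p(2-\alpha s_p)$ (which are $D(s_p)=0$ and $D'(s_p)=0$) collapses $D''(s_p)\ge0$ to $(\alpha s_p)^2-4(\alpha s_p)+2\ge0$, hence $t\coloneqq\alpha s_p\le 2-\sqrt2$ (the branch $\alpha s_p\ge 2+\sqrt2$ is ruled out by continuity from $\alpha=2-\sqrt2$, where $s_p=1$). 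Eliminating $\kappa(s_p)$ from the same two relations gives
\[
  p=\frac{t(1-t)}{t^2-(1+\alpha)t+2\alpha},\qquad t=\alpha s_p\in(0,2-\sqrt2),
\]
and a direct check shows the right-hand side is increasing in $t$ on this range (its denominator is positive there for all $\alpha>2-\sqrt2$, with $\alpha^2-6\alpha+1$ governing where it could vanish) and equals $\dfrac{3-2\sqrt2}{\alpha-(3-2\sqrt2)}=c(\alpha)$ at $t=2-\sqrt2$; since $t<2-\sqrt2$ strictly when $\alpha>2-\sqrt2$, we obtain $p<c(\alpha)$. The case $\alpha=3+2\sqrt2$ is merely the removable singularity of this expression.

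I expect the main obstacle to be the branch/value selection in the middle step: the algebraic equation admits, a priori, a one-parameter family of candidate solutions, and the real content is showing that exactly one choice of $p$ together with one branching pattern gives a legitimate generating function — convex, bounded by $1$, with nonnegative coefficients — and that the changeover happens precisely when $\alpha^2-4\alpha+2=0$, i.e.\ at $\alpha_c=2-\sqrt2$. Within that, the most delicate analytic point is verifying $D\ge0$ on the \emph{whole} interval $[0,1]$ (not just near the endpoints) for the selected $p$ in each regime.
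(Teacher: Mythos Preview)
Your overall architecture matches the paper's: derive the RDE $X \stackrel{d}{=} P + \sum_{i=1}^N (X_i-1)^+$, turn it into a quadratic for $G$, and use PGF constraints to select the branch and pin down $p$. Two places need tightening. First, in the subcritical case you verify that $p = 1-\alpha$ gives an admissible solution but never exclude $p > 1-\alpha$. The paper closes this by proving (its Lemma~4.4) that for \emph{every} $p \geq 1-\alpha$ one has $D > 0$ on $[0,1)$, so $G$ must stay on the $+$ branch throughout, and then observing that $p > 1-\alpha$ would force $Q_+'(1-) = -\infty$, contradicting $\mathds{E}[X]\geq 0$. You have all the pieces for this (monotonicity of $D$ in $p$, the endpoint behaviour), but the exclusion step is not written. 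Second, in the supercritical case your assertion that a \emph{unique} $p$ makes the bad set collapse to a double root is neither needed (the theorem leaves $p$ implicit) nor proved, and you do not justify that there is exactly one branch switch or that $s_p$ is the \emph{smaller} of the two candidate roots. The paper handles this by eliminating the exponential from $D=D'=0$ to obtain a quadratic $f(s)$, noting that switches lie among the at most two roots of $f$, that the count of switches must be odd and hence equals one, and finally checking via a sign calculation on $D'$ that the larger root of $f$ has $D>0$ and so cannot be the switch point.

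Your route to the upper bound $p < c(\alpha)$ is genuinely different from the paper's and rather clean. The paper requires the discriminant of $f$ to be nonnegative, giving a quadratic inequality in $p$ whose case analysis on the sign of $\alpha^2-6\alpha+1$ yields $p \leq c(\alpha)$. You instead use that $s_p$ is a local minimum of $D$, so $D''(s_p)\geq 0$; substituting $D(s_p)=D'(s_p)=0$ collapses this to $(\alpha s_p)^2-4(\alpha s_p)+2\geq 0$, and your parametrisation $p=\dfrac{t(1-t)}{t^2-(1+\alpha)t+2\alpha}$ with $t=\alpha s_p$ has $t$-derivative $\dfrac{\alpha(t^2-4t+2)}{(t^2-(1+\alpha)t+2\alpha)^2}>0$ on $(0,2-\sqrt 2)$, so evaluating at the endpoint gives the bound directly, and even delivers the strict inequality (via $D'''<0$ ruling out $t=2-\sqrt 2$). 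Your Taylor expansion of $\sqrt{D}$ at $s=1$ to read off $\mathds{E}[X]$ is also more direct than the paper's L'H\^opital manoeuvre on the functional equation, and avoids having to choose between two roots afterwards.
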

\begin{remark}
We observe that  $\mathds{E}[X]$ undergoes a discontinuous phase change at $\alpha = 2 - \sqrt{2}:$
    \[
    \mathds{E}[X] = \begin{cases}
        2 - \alpha - \sqrt{2(\alpha^2 - 4\alpha + 2)} & 0 \leq \alpha \leq 2 - \sqrt{2} \\
        \infty & \alpha > 2 - \sqrt{2}.
    \end{cases}
    \]
\end{remark}
\begin{theorem}\label{Thm: prob of A_alpha}
    Let $T_\infty$ be a BGW(2,1/2) tree, conditioned on non-extinction, which is constructible as in Subsection 2.1. Assume that an independent Po$(\alpha)$ number of cars arrive at each of the vertices of $T_\infty$. Let $A_\alpha$ be the event that all of the arriving cars can park on the $T_\infty$. Then
    \[\mathds{P}(A_\alpha) = \begin{cases}
        e^{\alpha/2} \sqrt{\frac{\alpha^2 - 4\alpha + 2}{2(1-\alpha)}} & 0 \leq \alpha \leq 2-\sqrt{2} \\
        0 & \alpha > 2-\sqrt{2}.
    \end{cases} \]
\end{theorem}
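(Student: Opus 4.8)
The plan is to decompose the parking process on $T_\infty$ using Kesten's spine construction from Subsection 2.1 and reduce the event $A_\alpha$ to a statement about the spine-overflow variables $Y$ defined in equation (\ref{RDE for Y}). Recall that $A_\alpha$ — all cars park on $T_\infty$ — occurs if and only if the flux out of every vertex of $T_\infty$ is finite, equivalently the flux out of every vertex on the spine $S$ is zero (since the spine is the only infinite path, any car that fails to park must ultimately exit at the root of the spine). First I would park all the cars on the i.i.d.\ $\mathrm{BGW}(2,1/2)$ trees hanging off the spine; by Theorem \ref{Thm: pgf of X}, the overflow from each such subtree is a copy of $X$, which is finite almost surely for all $\alpha$ (it has $\mathds{P}(X<\infty)=1$; it is merely $\mathds{E}[X]=\infty$ when $\alpha>2-\sqrt 2$). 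Combining with the direct arrivals, vertex $S_h$ on the spine receives $Y_h$ cars, where the $Y_h$ are i.i.d.\ with the law of $Y$ in (\ref{RDE for Y}) — here $\gamma=\mathrm{Po}(\alpha)$, $\tilde\nu$ is the size-biased-minus-one of $\mathrm{Bin}(2,1/2)$, and $X_i\sim\phi(T)$ has the p.g.f.\ $G$ of Theorem \ref{Thm: pgf of X}.

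Next I would analyse parking on the spine itself. Since the spine is the one-sided infinite directed path $S_0\leftarrow S_1\leftarrow S_2\leftarrow\cdots$ with one parking space per vertex and $Y_h$ cars arriving at $S_h$, the flux at $S_0$ is zero if and only if, for every $h\ge 0$, at most $h+1$ cars arrive among $S_0,\dots,S_h$ — i.e.\ $\sum_{i=0}^{h} Y_i \le h+1$ for all $h\ge 0$. Writing $W_h \coloneq \sum_{i=0}^{h}(Y_i-1)$, this is the event $\{W_h\le 0\ \forall h\ge 0\}$ for a random walk with i.i.d.\ increments $Y_i-1$ started at $Y_0-1$. Thus $\mathds{P}(A_\alpha)=\mathds{P}(\sup_{h\ge 0} W_h \le 0)$, a classical ruin-type quantity. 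When $\mathds{E}[Y]=\mathds{E}[Y_0-1]+1$ satisfies $\mathds{E}[Y-1]>0$ the walk drifts to $+\infty$ and this probability is $0$; when $\mathds{E}[Y-1]<0$ it is strictly positive and computable via a generating-function / Wiener–Hopf identity. From $\mathds{E}[Y]=\tfrac12\mathds{E}[X]+\alpha$ and the formula $\mathds{E}[X]=2-\alpha-\sqrt{2(\alpha^2-4\alpha+2)}$ of Theorem \ref{Thm: pgf of X}(1), one gets $\mathds{E}[Y]=1+\tfrac12(\alpha-\sqrt{2(\alpha^2-4\alpha+2)})<1$ precisely for $\alpha<2-\sqrt2$ (and $\mathds{E}[Y]=\infty>1$ for $\alpha>2-\sqrt2$ by Theorem \ref{Thm: pgf of X}(2)), which already pins down the phase transition and gives $\mathds{P}(A_\alpha)=0$ for $\alpha>2-\sqrt2$.

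For the subcritical regime $0\le\alpha<2-\sqrt2$ I would compute $\mathds{P}(\sup_h W_h\le 0)$ explicitly. The cleanest route is a direct one-step recursion: let $q\coloneq \mathds{P}(A_\alpha)$ and, more usefully, condition on the car counts to express $q$ through the p.g.f.\ of $Y$, or better, exploit the self-similarity of the problem at $S_1$ (the subtree above and including $S_1$ is again a copy of $T_\infty$ with the same law). Because an extra supply of $k$ free spaces is needed to absorb surplus, the right object is $r(s)\coloneq\mathds{E}[s^{R}]$ where $R$ is the "deficit at the root'' of a parking process on $T_\infty$ with one extra space, or equivalently the hitting-probability generating function of the walk $W$; a renewal argument gives $r$ in closed form in terms of $G$ and the p.g.f.\ of $\mathrm{Po}(\alpha)$ composed with $\tilde\nu$, namely $\mathds{E}[s^{Y}] = e^{\alpha(s-1)}\big(\tfrac12+\tfrac12G(s)\big)^2$ using that the size-biased-minus-one of $\mathrm{Bin}(2,1/2)$ has p.g.f.\ $\tfrac12+\tfrac12 s$. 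Plugging in the explicit $G$ from Theorem \ref{Thm: pgf of X}(1) and simplifying, one lands on $\mathds{P}(A_\alpha) = e^{\alpha/2}\sqrt{\tfrac{\alpha^2-4\alpha+2}{2(1-\alpha)}}$; verifying this algebraic simplification (and checking $\mathds{P}(A_\alpha)\to 1$ as $\alpha\to 0$ and $\to 0$ as $\alpha\uparrow 2-\sqrt2$) is the bulk of the computation.

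The main obstacle I anticipate is the spine computation in the subcritical case: one must set up the correct recursive/Wiener–Hopf equation for the survival probability of the random walk $W_h=\sum_{i=0}^h(Y_i-1)$ with increment p.g.f.\ $e^{\alpha(s-1)}\big(\tfrac12+\tfrac12G(s)\big)^2$, extract the relevant root of the associated characteristic equation inside the unit disc, and then carry out the square-root simplifications so that the answer collapses to the stated closed form. A secondary subtlety is justifying the almost-sure finiteness of each $X_i$ (so that the $Y_h$ are well-defined finite random variables) even when $\mathds{E}[X_i]=\infty$, and justifying that finiteness of the flux on all of $T_\infty$ is equivalent to zero flux at $S_0$ — both follow from the tree structure (the spine is the unique ray) but should be stated carefully.
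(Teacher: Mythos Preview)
Your reduction of $A_\alpha$ to the survival event $\{C_n\ge 0\ \forall n\}$ for the spine random walk with i.i.d.\ increments $1-Y_i$ is exactly what the paper does. However, two points need correcting and one key simplification is missed.

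First, a slip in the RDE for $Y$: the quantity flowing from each pendant subtree into the spine vertex is the \emph{overflow} $(X-1)^+$, not $X$ itself (here $X$ is the number of cars \emph{visiting} the root, as in Theorem~\ref{Thm: pgf of X}, so one of them parks there). Hence $\mathds{E}[Y]=\alpha+\tfrac12\,\mathds{E}[(X-1)^+]=\alpha+\tfrac12(\mathds{E}[X]-1+p)$, which for $\alpha\le 2-\sqrt2$ (so $p=1-\alpha$) gives $\mathds{E}[Y]=1-\tfrac12\sqrt{2(\alpha^2-4\alpha+2)}$, not your $1+\tfrac{\alpha}{2}-\tfrac12\sqrt{2(\alpha^2-4\alpha+2)}$. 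Similarly, the p.g.f.\ of $Y$ is $e^{\alpha(s-1)}\bigl(\tfrac12+\tfrac12\,\mathds{E}[s^{(X-1)^+}]\bigr)$, with no square: the square belongs to the p.g.f.\ of $N\sim\mathrm{Bin}(2,1/2)$, not of $\tilde N$.

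Second, and more importantly, the paper avoids your proposed Wiener--Hopf or recursive computation entirely by exploiting that the increments $1-Y_i$ are \emph{bounded above by $1$} (skip-free upward). For such walks there is a classical ballot-type identity (stated in the paper as Lemma~\ref{RW}): if $W_1\le 1$ a.s., $\mathds{E}[W_1]=m\ge 0$ and $\mathds{P}(W_1=1)=q$, then $\mathds{P}(R_n\ge 0\ \forall n)=m/q$. Applying this with $W_i=1-Y_i$ reduces the whole subcritical computation to the two numbers $\mathds{E}[1-Y]$ and $\mathds{P}(Y=0)$. The paper computes $\mathds{P}(Y=0)=e^{-\alpha}\cdot\tfrac12(1+\rho)$ with $\rho=\mathds{P}(X\le 1)$, obtains $\rho$ from $p=\mathds{P}(X=0)=e^{-\alpha}\cdot\tfrac14(1+\rho)^2$, and the ratio gives $e^{\alpha/2}\sqrt{(\alpha^2-4\alpha+2)/(2(1-\alpha))}$ in one line. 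So the ``bulk of the computation'' you anticipate is not needed; once you recognise the skip-free structure, no generating-function inversion or characteristic-root extraction is required.
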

Before proving the first two of these theorems, we will show how Theorem \ref{Thm: limit of prob A_n} follows directly from Theorem \ref{Thm: prob of A_alpha}.

\begin{proof}[Proof of Theorem \ref{Thm: limit of prob A_n}]
    Our argument is identical to the reasoning provided in Goldschmidt and Chen \cite{geogw} to derive the Theorem 1.1 from Theorem 3.1 in that paper: It is true that conditional on the existence of a monotone coupling of the trees $T_n$ as $n$ increases, the proof of our Theorem \ref{Thm: limit of prob A_n} being a consequence of our Theorem \ref{Thm: prob of A_alpha} is identical to the proof of Theorem 1.1 in Goldschmidt and Przykucki \cite{binarygw}. The existence of such a coupling is shown by Luczack and Winkler in \cite{coupling}.
\end{proof}
Once again appealing to Kesten's construction of $T_\infty$, we shall perform the parking process on $T_\infty$ by first parking on one of the BGW(2,1/2) trees grafted to the spine $S$, and then considering the subsequent parking process on $S$. We shall let $X$ be the number of cars that arrive at the root after the parking process has been completed on this sub-tree. The iterative definition of Galton-Watson trees allows us to form a recursive distributional equation (RDE) for $X$:
\begin{equation}\label{RDE for X}
    X \stackrel{d}{=} P + \sum_{i=1}^{N}(X_i - 1)^{+},
\end{equation}
where $P \sim Po(\alpha); N \sim$ Bin(2,1/2); $X_1,X_2, \ldots$ are i.i.d copies of the random variable $X$, with all of the random variables in this equation being independent, and $x^+ \coloneq$ max($x$,0) for $x \in \mathds{R}$. Since $E[N]=1$ (so this GW tree is critical), the tree is a.s. finite, and $X$ gives an explicit construction of a solution to this equation, we have existence and uniqueness of $X$.

We shall now analyse generating functions to understand the first stage of parking on one BGW(2,1/2) tree and prove Theorem \ref{Thm: pgf of X}.
\section{Parking on a single Binomial Galton-Watson tree}
In this section, we shall first prove some results for general $\alpha > 0$. In Subsection 3.2 we will consider the $0 < \alpha \leq 2 - \sqrt{2}$ case before discussing the $\alpha > 2 - \sqrt{2}$ case in Subsection 3.3.

\subsection{Results for general $\alpha$}

We shall first establish results that hold for general $\alpha$.
\begin{proposition}\label{p>0, p=1-a or inf mean}
    Let $X$ be the number of cars arriving at the root of a BGW(2,1/2) tree and let $\alpha > 0$. Then
    \[p \coloneq \mathds{P}(X=0) \geq \frac{1}{4} e^{-\alpha} >0.\]
    Also, $\mathds{E}[X] = \infty$ or $p=1-\alpha$ (or both).
\end{proposition}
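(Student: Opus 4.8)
The plan is to establish the two assertions separately, both by working with the recursive distributional equation \eqref{RDE for X}. For the lower bound $p \ge \tfrac14 e^{-\alpha}$, I would argue directly from the structure of the tree: the event $\{X=0\}$ is implied by the conjunction of several simple sufficient events. One clean sufficient condition is that no car ever arrives at the root \emph{and} the root has no children (or the subtrees attached to the root absorb all their own arrivals). The cleanest version: condition on $N$, the number of children of the root. With probability $\tfrac14$ we have $N=0$, and then $X = P \sim Po(\alpha)$, so $\mathds P(X = 0 \mid N = 0) = e^{-\alpha}$. Hence $p \ge \mathds P(N=0)\,\mathds P(P = 0) = \tfrac14 e^{-\alpha} > 0$. (If a stronger or cleaner bound is wanted one could also incorporate the $N=1$ and $N=2$ contributions via $p = e^{-\alpha}\mathds E[(1 + (p^{\uparrow}))^{\ast}]$-type identities, but the $N=0$ term alone already gives the stated bound.)

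For the dichotomy ``$\mathds E[X] = \infty$ or $p = 1-\alpha$,'' I would take expectations in \eqref{RDE for X} under the assumption that $\mathds E[X] < \infty$ and show this forces $p = 1-\alpha$. Write $m \coloneq \mathds E[X]$ and condition on $N \sim \mathrm{Bin}(2,1/2)$. Using independence and $\mathds E[N] = 1$, taking expectations gives
\[
m = \alpha + \mathds E\!\left[\sum_{i=1}^N (X_i - 1)^+\right] = \alpha + \mathds E[N]\,\mathds E[(X-1)^+] = \alpha + \mathds E[(X-1)^+].
\]
Now use the elementary identity $\mathds E[(X-1)^+] = \mathds E[X] - \mathds P(X \ge 1) = m - (1-p)$, valid for any nonnegative integer-valued $X$ since $(X-1)^+ = X - \mathds 1_{\{X \ge 1\}}$. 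Substituting, $m = \alpha + m - (1-p)$, and the $m$'s cancel (this is where finiteness of $m$ is essential), leaving $1 - p = \alpha$, i.e.\ $p = 1-\alpha$. Conversely if $\mathds E[X] = \infty$ there is nothing to prove, so one of the two alternatives always holds.

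The main subtlety — and the step I would be most careful about — is the justification of the interchange of expectation and the (random, but bounded by $N \le 2$) sum, and the use of Wald-type reasoning: since $N$ and the $X_i$ are independent and $N \le 2$ almost surely, $\mathds E\big[\sum_{i=1}^N (X_i-1)^+\big] = \sum_{k=0}^2 \mathds P(N=k)\sum_{i=1}^k \mathds E[(X_i-1)^+] = \mathds E[N]\,\mathds E[(X-1)^+]$ with no integrability obstruction on the $N$ side; the only place finiteness matters is cancelling $m$ from both sides, which is exactly why the conclusion is a dichotomy rather than an unconditional identity. I would also note that $(X-1)^+ \le X$ so $\mathds E[(X-1)^+] < \infty$ whenever $m < \infty$, keeping every quantity in the displayed chain well-defined in that case.
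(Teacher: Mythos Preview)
Your proposal is correct and follows essentially the same approach as the paper's own proof: both obtain the lower bound by intersecting the events $\{N=0\}$ and $\{P=0\}$, and both establish the dichotomy by taking expectations in the RDE, using $\mathds{E}[(X-1)^+] = \mathds{E}[X] - \mathds{P}(X\geq 1)$ and cancelling $\mathds{E}[X]$ under the finiteness assumption. Your version is slightly more explicit about the Wald-type justification, but the arguments are otherwise identical.
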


\begin{proof}
    The event of the root of the GW tree having no offspring and no cars arriving there directly necessitates $X=0$. This gives the lower bound on $p$ by independence of $N$ and $P$:
    \[p \geq \mathds{P}(N = 0, P = 0) = \frac{1}{4} \cdot \exp(-\alpha).\]

    Taking expectations in (\ref{RDE for X}) and using independence and $\mathds{E}[N]=1$ leads to
    \[\mathds{E}[X] = \alpha + \mathds{E}[X] - \mathds{E}[N] \cdot \mathds{P}(X \geq 1) =\alpha + \mathds{E}[X] - \mathds{P}(X \geq 1) \]

    If the mean of $X$ is finite, we may rearrange to get $p=1-\alpha$. Otherwise, $\mathds{E}[X] = \infty$, since $X$ is non-negative.
\end{proof}

We shall now study the probability generating function of $X$, $G(s) \coloneq \mathds{E}[s^X]$ for $s \in [0,1]$. We have
\begin{equation}\label{pgf of X eqn}
    \begin{aligned}[b]
        G(s) &= \mathds{E} \left[s^P \right] \mathds{E} \left[ \mathds{E} \left[ s^{(X-1)^+}\right]^N \right] \\
        &= \frac{1}{4} e^{\alpha(s-1)} \left(1 + \mathds{E} \left[ s^{(X-1)^+}\right]^N \right)^2 \\
        &= \frac{1}{4s^2} e^{\alpha(s-1)} \left(G(s) + s(1+p) - p \right)^2 \\
        &= \frac{1}{4s^2} a(s) \left(G(s) + b(s) \right)^2
    \end{aligned}
\end{equation}
where $a(s) = e^{\alpha(s-1)}$ and $b(s) = s(1+p) - p$. We rearrange to obtain the quadratic equation
\[
a(s)G(s)^2 + 2(a(s)b(s) - 2s^2)G(s) + a(s)b(s)^2 = 0.
\]
As $a(s) > 0$ for all $s$, solving this gives
\[
G(s) = \frac{2s^2 - a(s)b(s) \pm 2s\sqrt{s^2 - a(s)b(s)}}{a(s)}.
\]
For simplicity, we define
\begin{align}\label{Q_+}
    Q_+(s) \coloneq \frac{2s^2 - a(s)b(s) + 2s\sqrt{s^2 - a(s)b(s)}}{a(s)}
\end{align}
and
\begin{align}\label{Q_-}
    Q_-(s) \coloneq \frac{2s^2 - a(s)b(s) - 2s\sqrt{s^2 - a(s)b(s)}}{a(s)}
\end{align}
for the two possible branches of the pgf $G$. Note that as we have found an explicit solution to the RDE (\ref{RDE for X}), $G$ must exist, so we must have for all $s \in [0,1]$ \begin{align}\label{g(s) def and >= 0}
    g(s) \coloneq s^2 - a(s)b(s) = s^2 - e^{\alpha(s-1)} \left(s(1+p) - p \right) \geq 0.
\end{align}

We now give another condition on $p$.
\begin{lemma}\label{p>=1-alpha}
    For any $\alpha > 0,$ we have $p \geq 1-\alpha$.
\end{lemma}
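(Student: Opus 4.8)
The plan is to exploit the two expressions for the probability generating function $G$ derived in \eqref{pgf of X eqn}–\eqref{Q_-}, together with the boundary value $G(0) = \mathds{P}(X = 0) = p$ and the normalization $G(1) = 1$. From Proposition \ref{p>0, p=1-a or inf mean} we already know that either $\mathds{E}[X] = \infty$ or $p = 1 - \alpha$; in the latter case the inequality holds with equality, so the real content is to rule out $p < 1 - \alpha$ when the mean is infinite, or more uniformly, to show $p < 1 - \alpha$ is simply impossible. I would argue by contradiction: assume $p < 1 - \alpha$ and derive a violation of the nonnegativity constraint \eqref{g(s) def and >= 0}, namely $g(s) = s^2 - e^{\alpha(s-1)}(s(1+p) - p) \geq 0$ for all $s \in [0,1]$, or alternatively a violation of $0 \le G(s) \le 1$.

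First I would examine $g$ near $s = 0$. We have $g(0) = -e^{-\alpha}(-p) = p\,e^{-\alpha} \geq 0$ automatically, and $g(1) = 1 - (1+p) + p = 0$, so $s = 1$ is always a root of $g$. The key is the behaviour of $g$ at the right endpoint: compute $g'(1)$. Differentiating, $g'(s) = 2s - e^{\alpha(s-1)}\big[\alpha(s(1+p)-p) + (1+p)\big]$, so $g'(1) = 2 - [\alpha\cdot 1 + (1+p)] = 1 - \alpha - p$. Since $g(1) = 0$ and $g(s) \geq 0$ on $[0,1]$, we need $g$ to approach $0$ from above as $s \uparrow 1$, which forces $g'(1) \leq 0$, i.e. $1 - \alpha - p \leq 0$, that is $p \geq 1 - \alpha$. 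This is exactly the claim, so the proof reduces to justifying that $g'(1) \le 0$ is genuinely forced.

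The main obstacle is making the endpoint argument airtight: $g'(1) < 0$ would contradict $g \ge 0$ immediately (for $s$ slightly less than $1$, $g(s) \approx g'(1)(s-1) > 0$ — wait, $s - 1 < 0$ and $g'(1) < 0$ gives $g(s) > 0$, so that direction is fine; it is $g'(1) > 0$ that is fatal, since then $g(s) \approx g'(1)(s-1) < 0$ for $s < 1$ near $1$). So I must show $g'(1) > 0$ leads to a contradiction, which it does: it would make $g$ strictly negative just left of $1$, contradicting \eqref{g(s) def and >= 0} which holds because $G$ exists (the explicit solution of the RDE \eqref{RDE for X} shows $G$ is a genuine pgf, hence real-valued, forcing the discriminant $g(s)$ to be nonnegative on $[0,1]$). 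Therefore $g'(1) \le 0$, giving $p \ge 1 - \alpha$. The only subtlety to handle carefully is the case $g'(1) = 0$, which is consistent and corresponds to the boundary $p = 1 - \alpha$; and I should double-check that $g$ is smooth enough near $s=1$ (it is, being a polynomial times an exponential) so that the first-order Taylor expansion controls the sign. I expect this to be short; the bulk of the care goes into the sign bookkeeping of $g'(1)$ and citing \eqref{g(s) def and >= 0} as the source of the constraint.
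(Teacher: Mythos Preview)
Your proposal is correct and follows essentially the same approach as the paper: compute $g'(1) = 1 - \alpha - p$, observe that $g(1) = 0$, and argue that $p < 1 - \alpha$ would force $g'(1) > 0$ and hence $g(s) < 0$ for $s$ just below $1$, contradicting \eqref{g(s) def and >= 0}. The paper additionally disposes of the case $\alpha \ge 1$ first via Proposition~\ref{p>0, p=1-a or inf mean} (since then $1-\alpha \le 0 < p$), but your endpoint argument already covers all $\alpha > 0$ uniformly, so this is a cosmetic difference.
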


\begin{proof}
    If $\alpha \geq 1$, then $1 - \alpha \leq 0$. The result follows immediately by applying Proposition \ref{p>0, p=1-a or inf mean}. Now we consider $0 < \alpha < 1$.
    Assume (for a contradiction) that $p < 1-\alpha$. It is clear that $g$ is infinitely differentiable with
    \[g'(s) = 2s - e^{\alpha(s-1)} \left( \alpha(s(1+p) - p) + 1 + p \right).\]
    Then $g(1) = 0$ and $g'(1) = 1 - \alpha - p > 0$, by assumption. By continuity of $g$, we must have $g(s) < 0$ for large $s < 1$. But this contradicts (\ref{g(s) def and >= 0}), thus completing the proof.
\end{proof}
We now show that $G$ is initially on the $Q_+$ branch.
\begin{lemma}\label{G = Q_+ near s=0}
    For all $\alpha > 0$ there exists some $\epsilon_\alpha > 0$ such that for $s \in [0, \epsilon_\alpha)$ we have $G(s) = Q_+(s)$.
\end{lemma}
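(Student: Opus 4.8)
The plan is to exploit the fact that, for each fixed $s$, the number $G(s)$ is a root of the quadratic $a(s)G(s)^2 + 2(a(s)b(s) - 2s^2)G(s) + a(s)b(s)^2 = 0$, hence $G(s) \in \{Q_+(s), Q_-(s)\}$; so it suffices to rule out the $Q_-$ branch on a neighbourhood of $0$. The key observation is that the two branches are distinguished by the sign of $Q_\pm(s) + b(s)$: a one-line computation using $g(s) = s^2 - a(s)b(s)$ gives
\[
Q_\pm(s) + b(s) = \frac{2s\bigl(s \pm \sqrt{g(s)}\,\bigr)}{a(s)},
\]
so that $Q_+(s) + b(s) \geq 0$ for all $s \in [0,1]$, whereas $Q_-(s) + b(s)$ has the sign of $s - \sqrt{g(s)}$.

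Next I would record a lower bound on $G(s) + b(s)$. Since $G$ is a probability generating function with non-negative coefficients it is non-decreasing on $[0,1]$, so $G(s) \geq G(0) = p$, and therefore $G(s) + b(s) \geq p + b(s) = s(1+p) > 0$ for $s \in (0,1]$. On the other hand, Proposition~\ref{p>0, p=1-a or inf mean} gives $p > 0$, so the interval $\bigl(0, \tfrac{p}{1+p}\bigr)$ is non-empty, and for $s$ in it we have $b(s) = s(1+p) - p < 0$; hence $g(s) = s^2 - a(s)b(s) > s^2 > 0$, so $\sqrt{g(s)} > s$ and $Q_-(s) + b(s) < 0$. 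Comparing this with the lower bound $G(s)+b(s) \geq s(1+p) > 0$ forces $G(s) \neq Q_-(s)$ for every $s \in \bigl(0, \tfrac{p}{1+p}\bigr)$. Since $g(s) > 0$ throughout that interval the two branches are genuinely distinct there (so $G$ cannot switch from one to the other), and as $G(s)$ is always one of them, we conclude $G(s) = Q_+(s)$ on $\bigl(0, \tfrac{p}{1+p}\bigr)$; at $s = 0$ one checks directly that $G(0) = p = -b(0) = Q_+(0)$. Taking $\epsilon_\alpha = \tfrac{p}{1+p}$ then completes the proof.

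I do not anticipate a serious obstacle here: once one notices that $Q_+(s)+b(s)$ and $Q_-(s)+b(s)$ have determined signs, everything is elementary, and the lower bound $G(s) \ge p$ is immediate. The only step requiring a little care is the last one — ensuring $G$ equals $Q_+$ on the \emph{whole} interval rather than jumping between branches — which is why I would emphasise that $g(s) > 0$ on $\bigl(0,\tfrac{p}{1+p}\bigr)$, so the pointwise constraints ``$G(s) \in \{Q_+(s), Q_-(s)\}$'' and ``$G(s) \neq Q_-(s)$'' pin $G$ down. A less slick alternative would be to match derivatives at $s=0$ (both branches take the value $p$ there, so one could compare $Q_+'(0)$ and $Q_-'(0)$ with $G'(0) = \mathds{P}(X = 1)$), but that requires more computation and does not localise the statement as cleanly.
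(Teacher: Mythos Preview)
Your proof is correct and, in my view, cleaner than the paper's. Both arguments ultimately eliminate the $Q_-$ branch near $0$ by showing it dips below the monotone lower bound $G(s)\ge G(0)=p$, but the executions differ. The paper works with derivatives at the origin: it computes $(Q_++Q_-)'(0)=-2(1+p)<0$, argues that not both of $Q_\pm'(0)$ can be negative (else neither branch could match the non-decreasing $G$), and then rules out $Q_+'(0)<0<Q_-'(0)$ by the pointwise inequality $Q_+\ge Q_-$; this forces $Q_-'(0)<0$, so $Q_-(s)<p\le G(s)$ for small $s$. Your route avoids derivatives entirely by rewriting $Q_\pm(s)+b(s)=\tfrac{2s}{a(s)}\bigl(s\pm\sqrt{g(s)}\bigr)$ and observing that on $\bigl(0,\tfrac{p}{1+p}\bigr)$ one has $b(s)<0$, hence $g(s)>s^2$ and $Q_-(s)+b(s)<0$, while $G(s)+b(s)\ge s(1+p)>0$. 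This gives an explicit $\epsilon_\alpha=\tfrac{p}{1+p}$ rather than an unspecified ``sufficiently small'' neighbourhood, and sidesteps the small bookkeeping the paper needs to justify that $Q_\pm'(0)$ exist (which relies on $g(0)=pe^{-\alpha}>0$). The derivative approach you mention as the ``less slick alternative'' is in fact exactly what the paper does.
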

\begin{proof}
    It is sufficient to show that $G$ cannot be given by $Q_-$ in a neighborhood around $s=0$. Notice that
    \[
    (Q_+ + Q_-)(s) = \frac{4s^2 - 2a(s)b(s)}{a(s)} = 4s^2 e^{\alpha(1-s)} - 2(s(1+p) - p)
    \]
    and so
    \begin{align}\label{(Q+ + Q-)'}
       (Q_+ + Q_-)'(s) = 4se^{\alpha(1-s)}(2 -\alpha s) -  2(1+p). 
    \end{align}
    Then $(Q_+ + Q_-)'(0) = -2(1 + p) < 0$ by Proposition \ref{p>0, p=1-a or inf mean}. This means that exactly one of $Q_+'(0)$ and $Q_-'(0)$ is negative (both cannot be negative, otherwise $Q_+$ and $Q_-$ would both be decreasing for $s > 0$ sufficiently small, but $G$ is non-decreasing). We assume for a contradiction that $Q_+'(0) < 0 < Q_-'(0)$. It can be easily verified from (\ref{Q_+}) and (\ref{Q_-}) that $Q_+(0) = Q_-(0) = p$. Then by the continuity of $Q_+$ and $Q_-$, we have $Q_+(s) < p < Q_-(s)$, for $s > 0$ sufficiently small. But as $Q_+(s) \geq Q_-(s)$ for all $s \in [0,1]$ we have a contradiction. Hence $Q_-'(s) < 0$ for $0 \leq s < \epsilon_\alpha$ for some $\epsilon_\alpha >0$, completing the proof. 
\end{proof}
We now discuss the case of small $\alpha \leq 2 - \sqrt{2}$.

\subsection{Small $\alpha$}
We now show that if $0 < \alpha \leq 2 - \sqrt{2}$, then $G(s)$ remains on the $Q_+(s)$ branch for the entire interval $[0,1]$.
\begin{lemma}\label{stay on Q+}
For $0 < \alpha \leq 2 - \sqrt{2}$, we have $G(s) = Q_+(s)$ for all $s \in [0,1]$.
\end{lemma}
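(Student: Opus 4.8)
We already know from Lemma \ref{G = Q_+ near s=0} that $G = Q_+$ on some initial interval $[0,\epsilon_\alpha)$. The natural strategy is a "continuation" argument: show that $G$ cannot switch from the $Q_+$ branch to the $Q_-$ branch anywhere on $[0,1]$. Since $G$ is continuous, any switch must happen at a point $s_0$ where $Q_+(s_0) = Q_-(s_0)$, i.e. where the discriminant $g(s_0) = s_0^2 - a(s_0)b(s_0)$ vanishes. So it suffices to rule out an interior zero of $g$ on $(0,1)$ — or, more precisely, to show that at the first such zero $G$ stays on $Q_+$ (equivalently, the branches can only touch, not cross, in a way compatible with $G$ being a valid pgf).

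Key steps in order: (1) Recall $g(1) = 0$ and, using $p \geq 1 - \alpha$ from Lemma \ref{p>=1-alpha}, compute $g'(1) = 1 - \alpha - p \leq 0$; combined with $g \geq 0$ on $[0,1]$ (from \eqref{g(s) def and >= 0}), this forces $s=1$ to be a genuine zero approached from above with $g$ decreasing into it. (2) Analyze the zeros of $g$ on $(0,1)$: write $g(s) \geq 0$ as $s^2 e^{\alpha(1-s)} \geq s(1+p) - p$, and study the function $h(s) := s^2 e^{\alpha(1-s)} - s(1+p) + p$. One computes $h(0) = p > 0$, $h(1) = 0$, and analyzes $h'$ to show that for $\alpha \leq 2 - \sqrt{2}$ the function $h$ has no zero in the open interval $(0,1)$ — here the constraint $\alpha \leq 2-\sqrt 2$ must enter, presumably by ensuring $h' $ doesn't change sign enough times (this is where the critical value $\alpha_c$ will show up, likely via a discriminant condition on a quadratic appearing in $h'$ or $h''$). (3) Since $g > 0$ on $(0,1)$, the two branches $Q_+, Q_-$ are distinct and smooth there, $G$ is continuous, and $G = Q_+$ near $0$, so $G = Q_+$ on all of $[0,1)$; extend to $s=1$ by continuity (noting $G(1) = Q_+(1) = 1$).

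The main obstacle will be step (2): showing $g > 0$ on $(0,1)$ for exactly the range $\alpha \leq 2-\sqrt 2$. This is delicate because $p$ itself is not known explicitly at this stage — for small $\alpha$ we will eventually learn $p = 1-\alpha$, but the lemma as stated should not presuppose that. I expect the cleanest route is: first establish $p = 1-\alpha$ in this regime (perhaps this is really what must be shown alongside, or it follows from Proposition \ref{p>0, p=1-a or inf mean} once we know $\mathbb{E}[X] < \infty$, which in turn follows once we've pinned down the branch), OR argue directly from $g \geq 0$, $g(1) = 0$, $g'(1) \leq 0$ that any interior zero would have to be a double zero, and then rule that out by a second-derivative or convexity estimate that is valid precisely when $\alpha \leq 2 - \sqrt{2}$. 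Handling the potential circularity between "which branch $G$ takes," "whether $\mathbb{E}[X] < \infty$," and "the value of $p$" cleanly is the subtle part; I would resolve it by treating $p$ as an unknown in $[1-\alpha, 1]$, showing that $g > 0$ on $(0,1)$ holds for the relevant $p$ when $\alpha \leq 2-\sqrt 2$, concluding $G = Q_+$, and only afterward extracting the exact value of $p$ from $G(1) = 1$.
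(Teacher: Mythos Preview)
Your plan is correct and essentially matches the paper's approach: the paper shows $g(s) > 0$ on $[0,1)$ by proving $g'(s) < 0$ there (since $g(1)=0$), bounding $g'$ from above by an explicit quadratic via the substitutions $p \mapsto 1-\alpha$ (the worst case, as the relevant expression in $g'$ is increasing in $p$, and $p \geq 1-\alpha$ by Lemma~\ref{p>=1-alpha}) together with $e^{\alpha(s-1)} \geq 1 + \alpha(s-1)$; the condition $\alpha \leq 2-\sqrt{2}$ then emerges exactly as you predict, from the location of that quadratic's vertex. Your anticipated circularity is resolved precisely as you suggest --- only the inequality $p \geq 1-\alpha$ is used here, and the exact value $p = 1-\alpha$ is deduced afterward in Lemma~\ref{p = 1 - alpha, E[X] = ...}.
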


\begin{proof}
    As shown in the previous lemma, the probability generating function $G(s)$ is initially on the branch $Q_+$ for $s \in [0, \epsilon_\alpha)$. As it must be continuous, $G$ can only switch branches where $g(s) = 0$, where $g$ is defined as in (\ref{g(s) def and >= 0}). We shall now show that if $0 < \alpha \leq 2 - \sqrt{2}$, $g(s) > 0$ for $s \in [0,1)$.
    \begin{equation}\label{g' <= h}
        \begin{aligned}[b]
            g'(s) &= 2s - e^{\alpha(s-1)} \left( \alpha(s(1+p) - p) + 1 + p \right) \\
            &\leq 2s - e^{\alpha(s-1)} \left( \alpha(s(2 - \alpha) + \alpha - 1) + 2 - \alpha \right) \\
            &\leq 2s - (1 + \alpha(s-1)) ( \alpha(s(2 - \alpha) + \alpha - 1) + 2 - \alpha) \\
            &= \alpha^2(2-\alpha)s^2 + (2 - \alpha(\alpha^2 -2\alpha +2) + \alpha(2 -\alpha)(\alpha-1))s \\
            &- (1-\alpha)(\alpha^2 -2\alpha +2) \\
            &=: h(s),
        \end{aligned}    
    \end{equation}
    where we used Lemma (\ref{p>=1-alpha}) (since $\alpha(s(1+p) - p) + 1 + p$ is increasing in $p$) and the inequality $e^x \geq 1 + x$ in the first and second inequalities, respectively. The function $h(s)$ is a quadratic with a root at $s=1$ and global maximum at $s = s_\text{max} \coloneq \frac{2 - \alpha(2\alpha^2 - 5\alpha + 4)}{2\alpha^2(2 - \alpha)}$. We find that $s_\text{max} \geq 1$ if and only if $\alpha^2 - 4\alpha + 2 \geq 0$, i.e. when $\alpha \leq 2 - \sqrt{2}$ or $\alpha \geq 2 + \sqrt{2}$. So, if $\alpha \leq 2 - \sqrt{2}$, then $h(s) \leq 0$ for $s \in [0,1]$ with equality only when $s=1$ and $\alpha = 2 - \sqrt{2}$. This tells us that $g'(s) < 0$ for $s \in [0,1)$. As $g(1) = 0$, we can conclude that $g(s) > 0$ for $s \in [0,1)$.
\end{proof}
We are now able to find exact forms for the quantities $p$ and $\mathds{E}[X]$ when $0 < \alpha \leq 2 - \sqrt{2}$.
\begin{lemma}\label{p = 1 - alpha, E[X] = ...}
    For all $\alpha \in (0, 2 - \sqrt{2}]$, we have $p = 1 - \alpha$ and $\mathds{E}[X] = 2 - \alpha - \sqrt{2(\alpha^2 - 4\alpha + 2)}$.
\end{lemma}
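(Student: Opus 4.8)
The plan is to use the identity $G = Q_+$ on all of $[0,1]$, established in Lemma~\ref{stay on Q+}, first to pin down $p$ and then to extract $\mathds{E}[X]$ as $\lim_{s\uparrow 1}(1-G(s))/(1-s)$. Rearranging \eqref{Q_+} with $g$ as in \eqref{g(s) def and >= 0}, the identity $G = Q_+$ reads
\[
1 - G(s) \;=\; \psi(s) \;-\; 2s\,e^{\alpha(1-s)}\sqrt{g(s)},
\qquad
\psi(s) \coloneq 1 - 2s^2 e^{\alpha(1-s)} + s(1+p) - p,
\]
where $\psi$ and $g$ are infinitely differentiable and $\psi(1) = g(1) = 0$. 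Since $G$ is a probability generating function, $1 - G(s) \ge 0$ for all $s \in [0,1)$, while \eqref{g(s) def and >= 0} gives $g \ge 0$ there, so $\sqrt{g(s)}$ is real and non-negative.

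The first and most delicate step is to show that $g$ has at least a double zero at $s=1$. Differentiating \eqref{g(s) def and >= 0} gives $g'(1) = 1 - \alpha - p$, which is $\le 0$ by Lemma~\ref{p>=1-alpha}. Suppose, for a contradiction, that $g'(1) < 0$. Then $g(s) = -g'(1)(1-s) + O\big((1-s)^2\big)$ as $s\uparrow 1$, so $\sqrt{g(s)}$ has exact order $(1-s)^{1/2}$, whereas $\psi(s) = O(1-s)$ because $\psi$ is smooth and vanishes at $1$; hence the subtracted square-root term dominates and $1 - G(s) < 0$ for $s$ sufficiently close to $1$, contradicting $1 - G \ge 0$. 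Therefore $g'(1) = 0$, i.e.\ $p = 1 - \alpha$. (The next step will also show $\mathds{E}[X] < \infty$, consistently with Proposition~\ref{p>0, p=1-a or inf mean}.)

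For the mean, monotone convergence gives $\mathds{E}[X] = \lim_{s\uparrow 1}\frac{1-G(s)}{1-s}$. Dividing the displayed identity by $1-s$ and letting $s\uparrow 1$: the first term tends to $-\psi'(1) = 3 - 2\alpha - p = 2 - \alpha$; for the second, since $g(1) = g'(1) = 0$ one has $g(s)/(1-s)^2 \to \tfrac12 g''(1)$, and a short calculation (substituting $p = 1-\alpha$ into the second derivative of $g$) gives $g''(1) = \alpha^2 - 4\alpha + 2$. For $0 < \alpha < 2-\sqrt 2$ this is positive, so $\sqrt{g(s)}/(1-s) \to \sqrt{(\alpha^2-4\alpha+2)/2}$ and
\[
\mathds{E}[X] \;=\; (2-\alpha) - 2\sqrt{\tfrac{\alpha^2-4\alpha+2}{2}} \;=\; 2-\alpha - \sqrt{2(\alpha^2-4\alpha+2)},
\]
which in particular is finite. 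At the endpoint $\alpha = 2-\sqrt 2$ we get $g''(1) = 0$, so $g$ vanishes at $1$ to order strictly larger than $2$; then $g(s)/(1-s)^2 \to 0$, hence $\sqrt{g(s)}/(1-s) \to 0$ and $\mathds{E}[X] = 2-\alpha$, which is again the value of the stated formula.

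The main obstacle is the first step. Its content is that the branch $Q_+$, which carries the term $+2s\,e^{\alpha(1-s)}\sqrt{g(s)}$, can coincide with a genuine probability generating function on a left neighbourhood of $s=1$ only when $g$ has no simple zero at $1$ — a simple zero would give $G$ a $\sqrt{1-s}$-type singularity incompatible with $1 - G \ge 0$. This is exactly where the hypothesis $\alpha \le 2-\sqrt 2$ enters: for larger $\alpha$, Lemma~\ref{stay on Q+} fails, $G$ lies on the $Q_-$ branch near $s=1$ where the square root is \emph{added} rather than subtracted, the obstruction disappears, and indeed $p > 1-\alpha$ and $\mathds{E}[X] = \infty$.
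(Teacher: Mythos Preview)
Your proof is correct. The argument for $p = 1-\alpha$ is essentially the same as the paper's: both exploit that if $g$ had a simple zero at $s=1$ then the $\sqrt{g(s)}$ term in $Q_+$ would produce a $\sqrt{1-s}$-type singularity incompatible with $G$ being a probability generating function. The paper packages this as $G'(1-) = Q_+'(1-) = -\infty$ (contradicting $\mathds{E}[X]\ge 0$); you package it as $1 - G(s) < 0$ near $s=1$. Same idea, different bookkeeping.

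Where you genuinely diverge is the computation of $\mathds{E}[X]$. The paper goes back to the \emph{implicit} quadratic relation \eqref{pgf of X eqn}, differentiates, applies L'H\^opital at $s=1$, and obtains a quadratic equation $G'(1-)^2 + 2(\alpha-2)G'(1-) + 4\alpha - \alpha^2 = 0$ with two candidate roots $2-\alpha \pm \sqrt{2(\alpha^2-4\alpha+2)}$; it then needs the external input that $X$ is stochastically increasing in $\alpha$ to select the minus sign. You instead stay with the \emph{explicit} formula $G = Q_+$, Taylor-expand $\psi$ and $g$ at $s=1$, and read off $\mathds{E}[X] = -\psi'(1) - 2\sqrt{g''(1)/2}$ directly. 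This is more elementary and, notably, produces the correct root with no ambiguity and no appeal to monotonicity. Your separate treatment of the endpoint $\alpha = 2-\sqrt 2$ (where $g''(1)=0$) is also clean. The paper's route has the minor stylistic advantage of not requiring the explicit value of $g''(1)$, but at the cost of the root-selection step.
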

\begin{proof}
    We have shown in Lemma \ref{stay on Q+} that for this range of $\alpha$ values, the pgf $G(s) = Q_+(s)$ for all $s$. We consider the identity 
    \begin{align*}\label{Q+' identity}
        Q_+'(s) = \frac{1}{2}(Q_+ + Q_-)'(s) + \frac{1}{2}(Q_+ - Q_-)'(s).
    \end{align*}
    As seen in (\ref{(Q+ + Q-)'}), $|(Q_+ + Q_-)'(s)| < \infty$ for all $s$.
    We have
    \[
    (Q_+ + Q_-)'(s) = 4\frac{a(
s)\left(g(s)^{1/2} + \frac{1}{2}sg(s)^{-1/2}g'(s)\right) - a'(s)sg(s)^{1/2}}{a(s)^2}
    \] where we recall $a(s) = e^{\alpha(s-1)}$ and $b(s) = s(1+p)-p$ and $g$ is defined as (\ref{g(s) def and >= 0}).
    It's clear that as $s \uparrow 1$ the only potentially infinite term in this derivative is $\frac{2sg(s)^{-1/2}g'(s)}{a(s)^2}$. Assume for a contradiction that $p > 1 - \alpha$. As shown in the proof of Lemma \ref{p>=1-alpha}, $g(s) \geq 0$, $g(1) = 0$, $g'(1) = 1 - \alpha - p < 0$. Thus we can conclude that $(Q_+ + Q_-)'(1-) = -\infty$, and hence $Q_+'(1-) = -\infty$. Then Lemma \ref{stay on Q+} and Abel's Theorem gives $\mathds{E}[X] = G'(1-) = Q_+'(1-) = -\infty$. But $X \geq 0$, so $\mathds{E}[X] \geq 0$, giving a contradiction. So $p \leq 1 - \alpha$. Applying Lemma \ref{p>=1-alpha} gives $p = 1 - \alpha$.
    We now differentiate (\ref{pgf of X eqn}) and rearrange the result to give
    \begin{align}\label{G' eqn}
        G'(s) = \frac{(b(s) + G(s)) (sa'(s)(b(s) + G(s)) + 2a(s)(sb'(s) - b(s) - G(s))}{4s^3 - 2sa(s)(b(s) + G(s))}
    \end{align}
    Since the critical Galton–Watson tree BGW(2,1/2) is finite almost surely and an almost surely finite number of cars arrive at each vertex, we have $X < \infty$ a.s., so $G(1) = 1$. As $p = 1 - \alpha$, we have shown that the numerator and denominator of (\ref{G' eqn}) both tend to 0 as $s \uparrow 1$. We apply L'Hôpital's rule, take the limit as $s \uparrow 1$, and use $a'(1) = \alpha$, $a''(1) = \alpha^2$, $b(1) = 1$, $b'(s) = 1+p$ for all s, and $G(1) = 1$ to give
    \begin{align*}
        G'(1-) = \frac{(\alpha - 2)G'(1-) + 4\alpha - \alpha^2}{2 - \alpha - G'(1-)}.
    \end{align*}
    We then rearrange to obtain the quadratic equation $G'(1-)^2 + 2(\alpha - 2)G'(1-) + 4\alpha - \alpha^2 = 0$ with solutions $2 - \alpha \pm\sqrt{2(\alpha^2 - 4\alpha + 2)}$. These solutions' respective derivatives are given by
    \[
    -1 \pm \frac{\sqrt{2}\left(\alpha - 2\right)}{\sqrt{\alpha^2 - 4\alpha + 2}}.
    \]
    As $X$ is stochastically increasing in $\alpha$, we choose the root with the non-decreasing derivative and conclude that $\mathds{E}[X] = 2 - \alpha -\sqrt{2(\alpha^2 - 4\alpha + 2)}$.
\end{proof}
\subsection{Large $\alpha$}
We now consider $\alpha > 2 - \sqrt{2}$.

\begin{lemma}\label{p > 1-alpha, E[X] = infty}
If $\alpha > 2 - \sqrt{2}$, we have $p > 1-\alpha$ and $\mathds{E}[X] = \infty$.
\end{lemma}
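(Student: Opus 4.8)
The plan is to reduce both assertions to a single negative statement: that $p = 1-\alpha$ is impossible when $\alpha > 2-\sqrt{2}$. Indeed, once we know $p \neq 1-\alpha$, Proposition \ref{p>0, p=1-a or inf mean} immediately forces $\mathds{E}[X] = \infty$, while combining $p \neq 1-\alpha$ with Lemma \ref{p>=1-alpha} gives $p > 1-\alpha$. So everything comes down to ruling out $p = 1-\alpha$.

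I would first dispose of the range $\alpha \geq 1$, which is trivial: there Proposition \ref{p>0, p=1-a or inf mean} gives $1-\alpha \leq 0 < \tfrac14 e^{-\alpha} \leq p$, so in particular $p \neq 1-\alpha$. It then remains to handle $2-\sqrt{2} < \alpha < 1$, where I would argue by contradiction. Suppose $p = 1-\alpha$, so that $b(s) = s(1+p)-p = (2-\alpha)s - (1-\alpha)$, and examine the smooth function $g(s) = s^2 - a(s)b(s)$ from (\ref{g(s) def and >= 0}) near $s=1$. One has $g(1) = 1 - a(1)b(1) = 0$ and $g'(1) = 1-\alpha-p = 0$ (as already computed in the proof of Lemma \ref{p>=1-alpha}), while one further differentiation gives $g''(1) = 2 - a''(1)b(1) - 2a'(1)b'(1) = 2 - \alpha^2 - 2\alpha(1+p) = \alpha^2 - 4\alpha + 2$ after substituting $p = 1-\alpha$. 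Since $2-\sqrt{2} < \alpha < 1 < 2+\sqrt{2}$, we have $\alpha^2 - 4\alpha + 2 < 0$, so $s=1$ is a strict local maximum of $g$; hence $g(s) < g(1) = 0$ for $s$ slightly below $1$, contradicting the nonnegativity of $g$ on $[0,1]$ asserted in (\ref{g(s) def and >= 0}). Therefore $p \neq 1-\alpha$, and the lemma follows by the reduction above.

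The only genuine computation here is the evaluation of $g''(1)$; the rest is bookkeeping, and the split of the argument at $\alpha = 1$ is simply a convenient way to arrange the case $1-\alpha \leq 0$. It is worth emphasising that, in contrast to the small-$\alpha$ analysis, this argument never needs to determine which of the branches $Q_+, Q_-$ the generating function $G$ follows: it exploits only the inequality $g(s) \geq 0$ on $[0,1]$, which is forced merely by $G$ existing as a genuine real-valued probability generating function.
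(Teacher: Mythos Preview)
Your proof is correct and follows essentially the same route as the paper: split off $\alpha \geq 1$ trivially, then for $2-\sqrt{2} < \alpha < 1$ assume $p = 1-\alpha$, compute $g(1)=g'(1)=0$ and $g''(1) = \alpha^2 - 4\alpha + 2 < 0$, and deduce $g(s)<0$ just below $s=1$ in contradiction to (\ref{g(s) def and >= 0}); finally invoke Lemma \ref{p>=1-alpha} and Proposition \ref{p>0, p=1-a or inf mean}. The only cosmetic difference is that you phrase the local analysis via the second-derivative test (strict local maximum at $s=1$), whereas the paper phrases it as $g'(s)>0$ for $s$ just below $1$ forcing $g(s)<g(1)=0$.
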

\begin{proof}
    Clearly if $\alpha \geq 1$ then $p > 0 \geq 1-\alpha$, so consider $\alpha \in (2 - \sqrt{2}, 1)$. Suppose $p = 1 - \alpha$. Then $g'(1) = 1 - \alpha - p = 0$. Furthermore,
    \begin{align}\label{g''}
    g''(s) = 2 - \alpha e^{\alpha(s-1)} \left(\alpha(s(1+p) - p) + 2(1 + p) \right)
    \end{align}
    and
    \[
    g''(1) = 2 - \alpha(\alpha + 2 + 2p) = 2 - \alpha(\alpha + 2 + 2(1 - \alpha) = \alpha^2 - 4\alpha + 2 < 0
    \]
    as $\alpha > 2 - \sqrt{2}$. Then $g'(s) > 0$ for sufficiently large $s < 1$. As $g(1) = 0$, we must have $g(s) < 0$ in a neighbourhood of 1. This contradicts the condition (\ref{g(s) def and >= 0}). Applying Lemma \ref{p>=1-alpha} gives the first result, then Proposition \ref{p>0, p=1-a or inf mean} gives the second result.
\end{proof}
We now show that $G(s)$ must finish on the branch $Q_-$ in a neighbourhood of 1.
\begin{lemma}\label{G ends on Q-}
    If $\alpha > 2 - \sqrt{2}$, there exists some $\delta_\alpha > 0$ such that $G(s) = Q_-(s)$ for $s \in (1 - \delta_\alpha, 1]$.
\end{lemma}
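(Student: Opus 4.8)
The plan is to pin down which branch $G$ follows near $s=1$ by combining the probability-generating-function constraint $G'(1^-) = \mathds{E}[X] \in [0,\infty]$ with a local analysis of $Q_+$ and $Q_-$ at $s = 1$; the key structural input is Lemma~\ref{p > 1-alpha, E[X] = infty}, which for $\alpha > 2 - \sqrt 2$ gives $p > 1 - \alpha$, hence $g'(1) = 1 - \alpha - p < 0$. Since $g$ is smooth with $g(1) = 0$ and $g'(1) < 0$, there is $\delta_\alpha > 0$ with $g(s) > 0$ on $(1 - \delta_\alpha, 1)$. By the argument in the proof of Lemma~\ref{stay on Q+}, $G$ can change branch only at a zero of $g$, so on $(1 - \delta_\alpha, 1)$ it equals $Q_+$ throughout or $Q_-$ throughout: the sets $\{G = Q_+\}$ and $\{G = Q_-\}$ are disjoint there (as $Q_+ > Q_-$ wherever $g > 0$), relatively closed, and cover the interval, so one of them is empty.

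To decide which, I would compare $Q_+'(1^-)$ with $Q_-'(1^-)$ using $Q_\pm = \tfrac12 (Q_+ + Q_-) \pm \tfrac12 (Q_+ - Q_-)$. As in the proof of Lemma~\ref{G = Q_+ near s=0}, $(Q_+ + Q_-)(s) = 4 s^2 e^{\alpha(1-s)} - 2(s(1+p) - p)$ is smooth, so $(Q_+ + Q_-)'(1^-)$ is finite; meanwhile $(Q_+ - Q_-)(s) = 4 s e^{\alpha(1-s)} g(s)^{1/2}$, whose derivative near $s = 1$ is dominated by the term $2 s e^{\alpha(1-s)} g'(s) g(s)^{-1/2}$, and this tends to $-\infty$ because $g'(1) < 0$ while $g(s) \downarrow 0$ as $s \uparrow 1$. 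Hence $(Q_+ - Q_-)'(1^-) = -\infty$, so $Q_+'(1^-) = -\infty$ and $Q_-'(1^-) = +\infty$.

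Finally, since $X \ge 0$ we have $G'(1^-) = \mathds{E}[X] \ge 0$ (Abel's theorem), so $G'(1^-) \neq -\infty$; this excludes $G \equiv Q_+$ on $(1 - \delta_\alpha, 1)$ and leaves $G \equiv Q_-$ there. As $Q_-(1) = 1 = G(1)$, this extends to $s \in (1 - \delta_\alpha, 1]$, which is the claim, and it is consistent with $\mathds{E}[X] = \infty$ from Lemma~\ref{p > 1-alpha, E[X] = infty} via $Q_-'(1^-) = +\infty$. I expect the only fiddly points to be (i) checking that $G$ does not oscillate between the branches near $1$, which is handled by $g > 0$ on $(1 - \delta_\alpha, 1)$, and (ii) verifying that the singular $g(s)^{-1/2}$ contribution to $(Q_+ - Q_-)'$ is not cancelled by the remaining (bounded) terms, which is immediate once the derivative is expanded since $g'(1) \neq 0$; neither should pose a real difficulty, and this mirrors the limiting-derivative bookkeeping already used in Lemma~\ref{p = 1 - alpha, E[X] = ...}.
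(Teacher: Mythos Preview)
Your proposal is correct and follows essentially the same route as the paper: use $p > 1-\alpha$ (from Lemma~\ref{p > 1-alpha, E[X] = infty}) to get $g'(1)<0$, deduce $(Q_+ - Q_-)'(1^-)=-\infty$ while $(Q_+ + Q_-)'$ stays bounded, conclude $Q_+'(1^-)=-\infty$ and $Q_-'(1^-)=+\infty$, and then rule out $Q_+$ via $G'(1^-)=\mathds{E}[X]\ge 0$. Your write-up is in fact more careful than the paper's terse version, explicitly isolating an interval $(1-\delta_\alpha,1)$ on which $g>0$ to preclude branch oscillation before comparing the one-sided derivatives.
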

\begin{proof}
    By exactly the same argument as in the proof of Lemma \ref{G = Q_+ near s=0}, we have that $|(Q_+ + Q_-)'(s)| < \infty$ and $(Q_+ - Q_-)'(1-) = -\infty$, since $p > 1 - \alpha$. We see that $Q_\pm'(1-) = \mp\infty$, so that $G = Q_-$ in a neighbourhood of 1.
\end{proof}
We now show that $G$ starts on the $Q_+$ branch, switches to the $Q_-$ branch (which it remains on) at a point in $(0,1)$.
\begin{lemma}
    Let $\alpha > 2 - \sqrt{2}$. Then there exists $t \in (0,1)$ such that
    \begin{align*}
        G(s) &= p + s(1-p) + 2s^2 e^{\alpha (1-s)} \\
        &+ \begin{cases} 
         2s e^{\alpha (1-s)} \sqrt{s^2 - e^{\alpha (s-1)} (p + s(1+p))}, & 0 \leq s \leq t \\
        - 2s e^{\alpha (1-s)} \sqrt{s^2 - e^{\alpha (s-1)} (p + s(1+p))}, & t < s \leq 1
        \end{cases}
    \end{align*}
\end{lemma}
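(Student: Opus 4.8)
The plan is to reduce the whole statement to one claim: that $g$ --- equivalently the sign-equivalent function $\tilde g(s) \coloneq e^{\alpha(1-s)}g(s) = s^2 e^{\alpha(1-s)} - s(1+p) + p$ --- has exactly one zero $t$ in $(0,1)$. Granting this, the conclusion follows from the branch structure already established: by Lemma~\ref{G = Q_+ near s=0} we have $G = Q_+$ on $[0,\epsilon_\alpha)$ and by Lemma~\ref{G ends on Q-} we have $G = Q_-$ on $(1-\delta_\alpha,1]$; since $Q_+(s) - Q_-(s) = \tfrac{4s}{a(s)}\sqrt{g(s)}$ is strictly positive off the zeros of $g$, and $G$ is continuous and always coincides with one of the two branches, $G$ can leave the branch $Q_+$ only at a zero of $g$. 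A zero in $(0,1)$ must in fact exist, since otherwise $\{G = Q_+\}$ and $\{G = Q_-\}$ would be disjoint, relatively closed subsets covering the connected interval $(0,1)$, each nonempty because $G$ agrees with a different branch near each endpoint. With $t$ the unique such zero and $g > 0$ on $(t,1)$, the switch to $Q_-$ forced near $1$ by Lemma~\ref{G ends on Q-} must occur exactly at $t$, giving $G = Q_+$ on $[0,t]$ and $G = Q_-$ on $[t,1]$; substituting (\ref{Q_+}) and (\ref{Q_-}) with $a(s) = e^{\alpha(s-1)}$ and $b(s) = s(1+p)-p$ produces the displayed expression.

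The substance is therefore the zero count of $\tilde g$ on $(0,1)$, and this rests on the identity $\tilde g''(s) = e^{\alpha(1-s)}\bigl(\alpha^2 s^2 - 4\alpha s + 2\bigr)$, obtained by differentiating $\tilde g'(s) = e^{\alpha(1-s)}(2s - \alpha s^2) - (1+p)$ once more. The quadratic $\alpha^2 s^2 - 4\alpha s + 2$ has roots $\tfrac{2\pm\sqrt{2}}{\alpha}$, and the hypothesis $\alpha > 2-\sqrt{2}$ places the smaller one, $s_* \coloneq \tfrac{2-\sqrt{2}}{\alpha}$, in $(0,1)$; hence $\tilde g'' > 0$ on $[0,s_*)$ and $\tilde g'' < 0$ on $(s_*,1]$ when $\alpha \le 2+\sqrt{2}$, with one further sign change back to positive at $\tfrac{2+\sqrt{2}}{\alpha}$ when $\alpha > 2+\sqrt{2}$. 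In every case $\tilde g'$ increases on $[0,s_*]$ and thereafter decreases (and then, in the large subcase, increases again) on $[s_*,1]$. Recording $\tilde g'(0) = -(1+p) < 0$ and $\tilde g'(1) = 1-\alpha-p < 0$ --- the latter using $p > 1-\alpha$ from Lemma~\ref{p > 1-alpha, E[X] = infty} --- together with $\tilde g(0) = p > 0$ (Proposition~\ref{p>0, p=1-a or inf mean}), $\tilde g(1) = 0$, and $\tilde g \ge 0$ on $[0,1]$ from (\ref{g(s) def and >= 0}), one deduces from the shape of $\tilde g'$ that it cannot stay $\le 0$ throughout $(0,1)$: that would make $\tilde g$ strictly decreasing and so positive on $[0,1)$, contradicting the existence of a zero. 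Consequently $\tilde g'$ has exactly two zeros $r_1 < r_2$ in $(0,1)$, with $\tilde g' < 0$ on $[0,r_1)$, $\tilde g' > 0$ on $(r_1,r_2)$, and $\tilde g' < 0$ on $(r_2,1]$; thus $\tilde g$ decreases on $[0,r_1]$, increases on $[r_1,r_2]$, and decreases on $[r_2,1]$.

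It remains to identify $r_1$ as the unique zero of $\tilde g$ on $(0,1)$, which I expect to be the delicate step, since it mixes the analytic constraint $\tilde g \ge 0$ with the topological fact that a zero must exist. As $\tilde g$ attains its minimum over $[0,r_2]$ at $r_1$ and $\tilde g \ge 0$, we have $\tilde g(r_1) \ge 0$; but if $\tilde g(r_1) > 0$ then $\tilde g > 0$ on $[0,r_2]$, and on $[r_2,1]$ it strictly decreases from $\tilde g(r_2) \ge \tilde g(r_1) > 0$ to $\tilde g(1) = 0$, so $\tilde g > 0$ on all of $[0,1)$ --- no zero, a contradiction. Hence $\tilde g(r_1) = 0$, and since $\tilde g$ strictly decreases from $p$ to $0$ on $[0,r_1]$, strictly increases off $0$ on $[r_1,r_2]$, and stays positive on $[r_2,1)$, this $r_1 \in (0,1)$ is the only zero of $\tilde g$ there. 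Taking $t \coloneq r_1$ and invoking the reduction of the first paragraph completes the proof; in the write-up the one point needing attention is the bookkeeping of $\tilde g'$ on $[s_*,1]$ when $\alpha > 2+\sqrt{2}$, where $\tilde g'$ acquires an extra local minimum but still climbs back only to the negative value $\tilde g'(1)$, so the zero count is unaffected.
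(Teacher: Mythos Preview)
Your argument is correct, but it takes a genuinely different route from the paper's.

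The paper argues as follows: since $G$ starts on $Q_+$ near $0$ (Lemma~\ref{G = Q_+ near s=0}) and ends on $Q_-$ near $1$ (Lemma~\ref{G ends on Q-}), the number of branch switches is odd. At any switch point $g(s)=0$, and because $g\ge 0$ on $[0,1]$ this forces $g'(s)=0$ as well. Dividing the two equations to eliminate the exponential yields a quadratic $f(s)=\alpha(1+p)s^2-(1+(1+\alpha)p)s+2p$, so there are at most two candidate switch points; oddness then gives exactly one. Your approach instead rescales to $\tilde g(s)=e^{\alpha(1-s)}g(s)$, computes $\tilde g''(s)=e^{\alpha(1-s)}(\alpha^2 s^2-4\alpha s+2)$, and uses the resulting convexity profile together with the endpoint signs $\tilde g'(0)<0$, $\tilde g'(1)<0$, the constraint $\tilde g\ge 0$, and the connectivity argument forcing an interior zero, to conclude that $\tilde g$ has the decreasing--increasing--decreasing shape with its unique interior zero at the first turning point.

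Both are valid. The paper's elimination trick is shorter and, more importantly, produces the explicit quadratic $f$ that is immediately reused in the next lemma (Lemma~\ref{p bound and s_p}) to identify the switch point as the smaller root $s_p$. Your convexity argument is more self-contained and in effect anticipates the content of Lemma~\ref{g has two turning points} (transported to $\tilde g$), but it does not by itself hand you the quadratic $f$, so you would still need the elimination step later to pin down $s_p$ and to derive the bounds on $p$. Your handling of the subcase $\alpha>2+\sqrt{2}$, where $\tilde g''$ changes sign twice but $\tilde g'$ still returns only to the negative value $\tilde g'(1)$, is correct and worth spelling out as you note.
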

\begin{proof}  
    Since $G(s)$ is on the $Q_+$ and $Q_-$ branches in neighbourhoods around 0 and 1, respectively, it must switch branches an odd number of times. Let the first branch switch occur at $t \in (0,1)$. By continuity of $G(s)$, these branch switches can only occur when $g(s) = 0$. As $g(s) \geq 0$, we must also have $g'(s) = 0$ at any branch switches as these are necessarily turning points. This gives the system of equations
    \[
    \begin{cases}
        s^2 - e^{\alpha(s-1)} \left(s(1+p) - p \right) = 0 \\
        2s - e^{\alpha(s-1)} \left( \alpha(s(1+p) - p) + 1 + p \right) = 0.
    \end{cases}
    \]
    We eliminate $e^{\alpha(s-1)}$ to give $\frac{s^2}{s(1+p)-p} = \frac{2s}{\alpha(s(1+p)-p) + 1 + p}$ which we rearrange into the quadratic equation $f(s) \coloneq \alpha(1+p)s^2 - (1+(1+\alpha)p)s +2p = 0$. This has at most two roots in $(0,1)$, so $G$ can only switch branches at most twice. We know that $t$ must be a root of $f$. As $G$ must have an odd number of branch switches, $G$ switches branches exactly once, giving the stated form for $G$.
\end{proof}
We continue with our analysis of $g(s) = s^2 - e^{\alpha(s-1)} \left(s(1+p) - p \right)$.
\begin{lemma}\label{g has two turning points}
    Let $\alpha > 2 - \sqrt{2}$. Then $g$ has two turning points, $t_1$ and $t_2$, which satisfy $0 < t_1 < t_2 < 1$. Both $t_1$ and 1 are roots of $g$, and $t_1$ is the point at which $G$ switches branches. Furthermore, $g$ is monotonically decreasing on $(0, t_1)$, increasing on $(t_1, t_2)$, and decreasing on $(t_2, 1)$.
\end{lemma}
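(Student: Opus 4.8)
The plan is to reduce the statement to the sign pattern of $g'$ on $[0,1]$, and to control this by showing that $g'$ is \emph{strictly concave} on $[0,1]$. The inputs I would use, all already established, are: $g(s)\ge 0$ on $[0,1]$ by \eqref{g(s) def and >= 0}; $g(0)=p\,e^{-\alpha}>0$, since $p>0$ by Proposition \ref{p>0, p=1-a or inf mean}; $g(1)=0$; and $g'(1)=1-\alpha-p<0$ by Lemma \ref{p > 1-alpha, E[X] = infty}. Most importantly, the preceding lemma shows that $G$ switches branches exactly once, at a point I will call $t_1\in(0,1)$, and that any branch-switch point $s$ satisfies both $g(s)=0$ and $g'(s)=0$; hence $t_1$ is at once a root of $g$ and a turning point of $g$.

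First I would prove the concavity of $g'$. Differentiating \eqref{g''} once more gives
\[ g'''(s) = -\alpha^2 e^{\alpha(s-1)}\bigl(\alpha(s(1+p)-p)+3(1+p)\bigr). \]
The bracket is affine and increasing in $s$, so on $[0,1]$ its minimum is at $s=0$, equal to $3+p(3-\alpha)$. Using the elementary bound $p\le e^{-\alpha}$ (if a car arrives directly at the root then $X\ge 1$, so $\{X=0\}\subseteq\{P=0\}$; the sharper $p<c(\alpha)$ from Theorem \ref{Thm: pgf of X}(3) would also do), one checks $p(\alpha-3)^{+}\le e^{-\alpha}(\alpha-3)^{+}\le e^{-4}<3$, so the bracket is strictly positive on $[0,1]$ and therefore $g'''<0$ there. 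Consequently $g'$ is strictly concave on $[0,1]$; in particular $g'$ has at most two zeros in $[0,1]$ and vanishes on no subinterval.

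Next I would locate the zeros of $g'$ and read off the signs. Since $g$ is given by an explicit real-analytic formula and $g(0)>0$, $g$ is not identically zero; as $g\ge 0$ on $[t_1,1]$ with $g(t_1)=g(1)=0$, the maximum of $g$ over $[t_1,1]$ is strictly positive, attained at some $t_2\in(t_1,1)$ with $g'(t_2)=0$. Combined with $g'(t_1)=0$, this produces two distinct zeros $t_1<t_2$ of $g'$ in $[0,1]$, and by the concavity step they are the only ones; in particular $0<t_1<t_2<1$. A strictly concave function vanishing exactly at $t_1<t_2$ is strictly positive on $(t_1,t_2)$ and strictly negative on $[0,t_1)\cup(t_2,1]$: for $s\in(t_1,t_2)$ write $s$ as a convex combination of $t_1,t_2$ and use $g'(t_1)=g'(t_2)=0$; for $s$ outside, write $t_1$ (resp.\ $t_2$) as a convex combination of $s$ and $t_2$ (resp.\ $t_1$). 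Translating back: $g$ is strictly decreasing on $(0,t_1)$, strictly increasing on $(t_1,t_2)$, strictly decreasing on $(t_2,1)$; $t_1$ and $1$ are roots of $g$; $t_1$ is the branch-switch point; and $t_1,t_2$ are genuine turning points because $g'$ changes sign at each. This is exactly the assertion.

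The step I expect to be the main obstacle is the concavity of $g'$ — i.e.\ showing $g'''<0$, equivalently that the affine bracket above stays positive on $[0,1]$ for every $\alpha>2-\sqrt2$. This is the one place where a quantitative bound on $p$ is really needed: $p<1$ is not quite enough once $\alpha$ is large, but $p\le e^{-\alpha}$ suffices with enormous room to spare. Everything after the concavity is soft analysis: the second turning point $t_2$ exists automatically from $g\ge 0$ and $g\not\equiv 0$, and the complete sign pattern of $g'$ is then forced by strict concavity together with the two located zeros.
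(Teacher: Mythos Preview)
Your proof is correct and follows essentially the same route as the paper: both arguments hinge on $g'''<0$ to bound the number of turning points of $g$ by two, then use the branch-switch point $t_1$ together with $g(t_1)=g(1)=0$, $g\ge 0$, $g\not\equiv 0$ to force a second turning point $t_2\in(t_1,1)$. The only notable difference is that the paper reaches ``at most two turning points'' by showing $g''$ changes sign exactly once (computing $g''(0)>0$ and $g''(1)<0$), whereas you go directly from $g'''<0$ to strict concavity of $g'$; your justification of $g'''<0$ via $p\le e^{-\alpha}$ is in fact more careful than the paper's, which asserts the sign without argument.
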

\begin{proof}
    We have
    \begin{align*}
       g'''(s) &= - \alpha^2 e^{\alpha(s-1)} \left(\alpha(s(1+p) - p) + 3(1 + p) \right) \leq 0
    \end{align*}
for all $s \in (0,1)$, so $g''$ is monotonically decreasing on $(0,1)$. We recall that since $\alpha > 2 - \sqrt{2}$, we have $p > 1 - \alpha$ by Lemma \ref{p > 1-alpha, E[X] = infty}. Using (\ref{g''}) gives
\begin{align*}
    g''(0) &= 2 - \alpha e^{-\alpha} (2 + (2 - \alpha)p) \geq 2 - 4\alpha e^{-\alpha} \geq 2 - \frac{4}{e} > 0 \\
    g''(1) &= 2 - \alpha(\alpha + 2 + 2p) < 2 - \alpha(\alpha + 2 + 2(1 - \alpha) = \alpha^2 - 4\alpha + 2 < 0,
\end{align*}
as $\alpha > 2 - \sqrt{2}$. Hence, $g''$ has one root in $(0,1)$. This means that $g'$ is monotonically increasing, then decreasing in $(0,1)$, so $g$ has at most two turning points in $(0,1)$. We have shown that $g$ is strictly positive on $(0,1)$, except for either one or two points where $g$ and $g'$ both equal zero. Let $t_1$ be the leftmost of these. Then $g(0) = p e^{-\alpha} > 0$, $g(t_1)=0$ and $g(1)=0$. To ensure that $g(s) \geq 0$ on $(t_1,1)$, there must be a turning point $t_2 \in (t_1, 1)$ where $g(t_2) > 0$. As $g$ cannot have three turning points in $(0,1)$, there are no more roots of $g$ in $(t_1, 1)$. The description of $g$ given in the statement of the lemma follows.
\end{proof}
We can now identify the value of the branch switching point $s_p$ in Theorem \ref{Thm: pgf of X}.
\begin{lemma}\label{p bound and s_p}
Let $\alpha > 2 - \sqrt{2}$. Then the point $t$ where G(s) switches branch is the smaller root of the quadratic function $f$.
\end{lemma}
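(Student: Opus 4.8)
The plan is to relate the quadratic $f$ to the function $g$ and its derivative through an exact algebraic identity, and then to read off which root of $f$ the branch–switch point must be from the known local behaviour of $g$ at its first turning point.

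First I would verify the identity
\[
e^{\alpha(s-1)}\,f(s) \;=\; 2g(s) - s\,g'(s) \qquad \text{for every } s.
\]
This is a direct computation: substituting $g(s) = s^2 - e^{\alpha(s-1)}(s(1+p)-p)$ and $g'(s) = 2s - e^{\alpha(s-1)}\big(\alpha(s(1+p)-p) + 1 + p\big)$ into the right-hand side, the terms $2s^2$ cancel, the common factor $e^{\alpha(s-1)}$ pulls out, and what remains is exactly $\alpha(1+p)s^2 - (1+(1+\alpha)p)s + 2p = f(s)$. Evaluated at the first turning point $t_1$ of $g$ — which by Lemma~\ref{g has two turning points} satisfies $g(t_1)=0$ and $g'(t_1)=0$ and is precisely the point $t$ at which $G$ switches branch — this recovers $f(t_1)=0$, consistent with the preceding lemma that $t$ is a root of $f$.

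Next I would pin down the sign of $f$ just to the left of $t_1$. By Lemma~\ref{g has two turning points}, $g$ is strictly positive on $[0,t_1)$ (it decreases from $g(0)=pe^{-\alpha}>0$ to $g(t_1)=0$) and is monotonically decreasing there, so $g(s)>0$ and $g'(s)\le 0$ for $s\in(0,t_1)$; hence $2g(s)-s\,g'(s)>0$, and the identity gives $f(s)>0$ on $(0,t_1)$. Since $f$ is a quadratic with positive leading coefficient $\alpha(1+p)>0$, a root $t_1$ with $f>0$ immediately to its left cannot be the larger root (an upward parabola with real roots is negative on the open interval strictly between them). Therefore $t_1$ is the smaller root of $f$, and by the quadratic formula this smaller root is exactly $s_p=\frac{1+(1+\alpha)p-\sqrt{(1+(1+\alpha)p)^2-8\alpha p(1+p)}}{2\alpha(1+p)}$, which completes the identification claimed in Theorem~\ref{Thm: pgf of X}.

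The crux, and the only step that is not routine, is discovering and checking the identity $e^{\alpha(s-1)}f(s)=2g(s)-s\,g'(s)$: it is what converts the implicit description of the branch switch (a common zero of $g$ and $g'$) into genuine sign information about $f$. After that, the root–ordering argument is immediate. If one prefers to avoid the ``sign to the left of $t_1$'' argument, differentiating the identity at $t_1$ and using $g(t_1)=g'(t_1)=0$ gives $f'(t_1)=-e^{\alpha(1-t_1)}\,t_1\,g''(t_1)$; since $t_1$ is a local minimum of $g$ lying strictly to the left of the unique inflection point of $g$ on $(0,1)$ one has $g''(t_1)>0$, so $f'(t_1)<0$, placing $t_1$ to the left of the vertex of $f$ and hence at the smaller root.
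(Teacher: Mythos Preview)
Your proof is correct and takes a genuinely different route from the paper's. The paper argues indirectly: it lets $r$ be the \emph{other} root of $f$, uses Lemma~\ref{g has two turning points} to get $g(r)>0$, then (after a side claim that the smaller root of $f$ exceeds $p/(1+p)$, stated as ``it can be shown'') bounds $e^{\alpha(r-1)}<r^2/(r(1+p)-p)$ and plugs this into $g'(r)$ to obtain $g'(r)>0$; the shape of $g$ then forces $r>t_1$. Your identity $e^{\alpha(s-1)}f(s)=2g(s)-s\,g'(s)$ is exactly the algebraic relation hiding behind that manipulation, but you deploy it more directly: the signs of $g$ and $g'$ on $(0,t_1)$ immediately give $f>0$ there, and a one-line parabola argument finishes. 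This is cleaner---it dispenses with the auxiliary bound $r>p/(1+p)$ that the paper leaves unproved and never touches the second root at all. Your alternative via $f'(t_1)=-e^{\alpha(1-t_1)}t_1\,g''(t_1)$ is also sound, since $g''(t_1)=0$ together with $g'''\le 0$ would force $g'\le 0$ on $(t_1,1)$, contradicting the increase of $g$ on $(t_1,t_2)$.
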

\begin{proof}
    The result is immediate if the root is repeated. Suppose $f$ has two distinct roots, and let $r$ be the other root. We have shown that $r \in (0,1)$, so $g(r) \geq 0$. Moreover, by Lemma \ref{g has two turning points}, $t$ is the only root of $g$ in $(0,1)$. Hence $g(r) > 0$.
    It can be shown that the smallest root of $f$ must be greater than $\frac{p}{1+p}$, so $r > \frac{p}{1+p}$ also. Then
    \begin{align*}
        g'(r) &= 2r - e^{\alpha(r-1)} \left(\alpha(r(1+p) + 1 + p \right) \\ &> 2r - \frac{r^2}{r(1+p)-p} \left(\alpha(r(1+p) + 1 + p \right) \\
        &= -\frac{r}{r(1+p)-p} \left(\alpha(1+p)r^2 - (1+(1+\alpha)p)r +2p \right) \\
        &= -\frac{r}{r(1+p)-p} f(r) \\ &= 0
    \end{align*}
    where the inequality comes from combining $g(r) > 0$ and $r > \frac{p}{1 + p}$ to give $e^{\alpha(r-1)} < \frac{r^2}{r(1+p)-p}$, and the final equality is due to $r$ being a root of $f$. By Lemma \ref{g has two turning points}, we can conclude that $r$ must be greater than $t = t_1$.
\end{proof}
\begin{remark}
    This allows us to obtain the form of $t = s_p$ given in Theorem \ref{Thm: pgf of X} by calculating the smaller root of the quadratic $f(s) = 0$.
\end{remark}
We now find bounds on the value of $p$.
\begin{lemma}
If $\alpha > 2 - \sqrt{2}$, then $\frac{e^{-\alpha}}{4} \lor (1 - \alpha) < p < c(\alpha)$ where
\[
c(\alpha) \coloneq 
\begin{cases}
    \frac{(3 - 2\sqrt{2})\alpha - 1}{\alpha^2 - 6\alpha + 1} & \alpha \neq 3 + 2\sqrt{2} \\
    \frac{3\sqrt{2} - 4}{8} & \alpha = 3 + 2\sqrt{2}.
\end{cases}
\]
\end{lemma}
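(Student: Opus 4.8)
The plan is to treat the two inequalities separately; the lower bound is immediate, so I begin there. Lemma~\ref{p > 1-alpha, E[X] = infty} already gives $p>1-\alpha$, so it suffices to show $p>\tfrac14 e^{-\alpha}$. Since $X=0$ occurs exactly when $P=0$ and $(X_i-1)^+=0$ for every $i\le N$, conditioning on $N$ in the recursive distributional equation~(\ref{RDE for X}) and using $N\sim$ Bin$(2,1/2)$ gives
\[
p = e^{-\alpha}\,\mathds{E}\!\left[\mathds{P}(X\le 1)^N\right] = \tfrac14 e^{-\alpha}\bigl(1+\mathds{P}(X\le 1)\bigr)^2 \ge \tfrac14 e^{-\alpha}(1+p)^2 > \tfrac14 e^{-\alpha},
\]
and combining with $p>1-\alpha$ yields $p>\tfrac14 e^{-\alpha}\lor(1-\alpha)$.

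For the upper bound, recall from the lemmas immediately preceding this one that the branch-switch point $t_1\in(0,1)$ of $G$ is a root of $f(s)=\alpha(1+p)s^2-(1+(1+\alpha)p)s+2p$; hence $f$ has a real root, so its discriminant (in $s$) is nonnegative, $(1+(1+\alpha)p)^2-8\alpha p(1+p)\ge 0$. Regrouping this as a quadratic inequality in $p$ turns it into $(\alpha^2-6\alpha+1)p^2+(2-6\alpha)p+1\ge 0$, whose discriminant in $p$ computes to $32\alpha^2$, so its roots in $p$ are $p_\pm=\frac{(3\pm 2\sqrt2)\alpha-1}{\alpha^2-6\alpha+1}$ for $\alpha\ne 3+2\sqrt2$, with $p_-=c(\alpha)$. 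I would then split on the sign of $\alpha^2-6\alpha+1=(\alpha-(3-2\sqrt2))(\alpha-(3+2\sqrt2))$: (i) for $2-\sqrt2<\alpha<3+2\sqrt2$ the leading coefficient is negative, the inequality forces $p\in[p_+,p_-]$, and since $p_+<0<p_-=c(\alpha)$ while $p>0$, we get $p\le c(\alpha)$; (ii) at $\alpha=3+2\sqrt2$ the condition degenerates to the linear inequality $(2-6\alpha)p+1\ge 0$, i.e.\ $p\le\frac1{6\alpha-2}=\frac{3\sqrt2-4}{8}=c(\alpha)$; (iii) for $\alpha>3+2\sqrt2$ the leading coefficient is positive with $0<p_-=c(\alpha)<p_+$, so the inequality reads $p\le c(\alpha)$ or $p\ge p_+$, and the crude bound $p\le\mathds{P}(P=0)=e^{-\alpha}$ together with the routine estimate $e^{-\alpha}<p_+$ on this range excludes the second alternative. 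In all cases $p\le c(\alpha)$; note that $c(\alpha)$ can be rewritten uniformly as $\frac{3-2\sqrt2}{\alpha-3+2\sqrt2}$, so the bound is also $\alpha p<(3-2\sqrt2)(1+p)$.

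It remains to exclude $p=c(\alpha)$. If this held, the discriminant of $f$ would vanish, so $t_1$ would be a double root of $f$ and $f'(t_1)=0$. On the other hand, one verifies the polynomial identity
\[
s\,f(s) = g(s)\bigl(\alpha b(s)+1+p\bigr) - b(s)\,g'(s), \qquad b(s)\coloneq s(1+p)-p,
\]
in which the exponential $e^{\alpha(s-1)}$ cancels; differentiating and evaluating at $s=t_1$, where $g(t_1)=g'(t_1)=f(t_1)=0$, gives $t_1 f'(t_1)=-b(t_1)\,g''(t_1)$. Here $t_1>0$, and $b(t_1)=t_1^2 e^{\alpha(1-t_1)}>0$ since $g(t_1)=0$; moreover $g''(t_1)>0$, because by Lemma~\ref{g has two turning points} $t_1$ is a strict local minimum of $g$ (so $g''(t_1)\ge 0$) and $g''(t_1)=0$ would, by the monotonicity of $g''$ on $(0,1)$, force $g$ to be strictly decreasing on all of $(t_1,1)$, contradicting $g(t_2)>g(t_1)=0$. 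Hence $f'(t_1)<0\ne 0$, a contradiction, so $p<c(\alpha)$. I expect this strictness step — extracting the identity for $g''(t_1)$ and reading off its sign from the turning-point picture of Lemma~\ref{g has two turning points} — to be the main obstacle; the case analysis for the non-strict bound is routine algebra once the discriminant condition is isolated, and the lower bound is essentially immediate.
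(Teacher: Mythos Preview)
Your argument is correct and in fact tightens the paper's own proof in two places. The overall strategy---derive the discriminant condition $(\alpha^2-6\alpha+1)p^2+2(1-3\alpha)p+1\ge 0$ from the existence of the branch-switch root of $f$, and then read off an interval for $p$---is the same as the paper's. The differences are as follows. For the range $\alpha>3+2\sqrt{2}$, the paper rules out $p\ge p_+$ by an informal continuity/monotonicity argument (``$p$ cannot make a discontinuous jump from $p\le p_-$ to $p\ge p_+$ as $\alpha$ increases''), whereas your direct bound $p\le\mathds{P}(P=0)=e^{-\alpha}<p_+$ is both simpler and fully self-contained. More importantly, the paper's proof only concludes $p\le c(\alpha)$ (and likewise only cites the non-strict $p\ge\tfrac14 e^{-\alpha}$ from Proposition~\ref{p>0, p=1-a or inf mean}), so it does not actually establish the strict inequalities asserted in the lemma; your extra work---the inequality $p\ge\tfrac14 e^{-\alpha}(1+p)^2>\tfrac14 e^{-\alpha}$ for the lower bound, and the identity $s f(s)=g(s)(\alpha b(s)+1+p)-b(s)g'(s)$ together with the sign analysis of $g''(t_1)$ for the upper bound---fills exactly these gaps. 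The identity and the appeal to the turning-point structure of Lemma~\ref{g has two turning points} are the genuinely new ingredients, and they are sound: $g''$ is strictly decreasing, so $g''(t_1)=0$ would force $g$ to be non-increasing on $(t_1,1)$, contradicting $g(t_2)>0$.
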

\begin{proof}
   The lower bound on $p$ follows from Proposition \ref{p>0, p=1-a or inf mean} and Lemma \ref{p > 1-alpha, E[X] = infty}.
   We have shown that the point $t \in (0,1)$ at which $G$ switches branch is a root of $f$, so its discriminant $(\alpha^2 - 6\alpha + 1)p^2 + 2(1-3\alpha)p + 1$ must be non-negative. Since $f(0) = 2p$ and $f(1) = \alpha + p -1$ are both strictly positive by Proposition \ref{p>0, p=1-a or inf mean} and Lemma \ref{p > 1-alpha, E[X] = infty}, both (including repeated) roots of $f$ lie in $(0,1)$. Suppose $\alpha \in (2 - \sqrt{2}, 3 + 2\sqrt{2})$, then $\alpha^2 - 6\alpha + 1 < 0$. This means that the discriminant of $f$ is a quadratic function of $p$ with a global maximum and roots given by $p_\pm = \frac{(3 \pm 2\sqrt{2})\alpha - 1}{\alpha^2 - 6\alpha + 1}$ where $p_+ < 0 < p_-$. The requirement that the discriminant and $p$ are both non-negative leads to the upper bound $p \leq p_-$. It is easy to verify that $c(\alpha)$ as defined in the statement of the lemma is the continuation of $p_-$ (as a function of $\alpha$) to the domain $(2 - \sqrt{2}, \infty)$ and when $\alpha = 3 + 2\sqrt{2}$ we have that $p_+$ is undefined so $p \leq c(3 + 2\sqrt{2})$. If $\alpha > 3 + 2\sqrt{2}$, then $f$ has a global minimum and $p_+ > p_- > 0$, so either $p \leq p_- = c(\alpha)$ or $p \geq p_+$. For small $\alpha > 3 + 2\sqrt{2}$, we have $p_+ > 1$, so we must have $p \leq c(\alpha)$ for such $\alpha$. As $\alpha$ increases, $p_\pm \rightarrow 0$ but $p_+ > p_-$. Hence it is impossible for $p$ to make a discontinuous jump from $p \leq p_-$ to $p \geq p_+$, as this would require $p$ to increase alongside $\alpha$ at some point. Hence, $p \leq c(\alpha)$ for all $\alpha > 2 - \sqrt{2}$. 
\end{proof}

\section{Parking on $T_\infty$}
We are now able to study the car parking process on all of $T_\infty$. We shall use the notation introduced in subsection 2.1 where for the spine $S = \{S_0, S_1, \ldots\}$  of $T_\infty$ that is rooted at $S_0$, $Y_h$ is the number of cars that arrive at the $h$-th vertex $S_h$ on $S$ coming either from any of the GW trees attached to $S_h$, or from any cars that initially attempt to park at $S_h$.

Then by equation (\ref{RDE for Y}) with $\gamma \sim Po(\alpha)$ we have that $Y_1, Y_2, \ldots$ have a common distribution satisfying the distributional equation
\begin{equation}\label{RDE for Y2}
    Y \stackrel{d}{=} P+\sum_{i=1}^{\tilde{N}}X_i
\end{equation}
where $P \sim Po(\alpha)$, $\mathds{P}(\tilde{N} = 0) = \mathds{P}(\tilde{N} = 1) = \frac{1}{2}$, and $X_1, X_2, \ldots$ are i.i.d copies of $X$, with all of these random variables being independent.

A key realisation in the parking process is that all of the cars can park on $S$ if and only if
\[C_n \coloneq n - \sum_{i=1}^{n}Y_i \geq 0 \text{ for all } n \in \mathds{N}.\]

This is due to the first $n$ vertices of $S$ representing $n$ parking spaces, and there are at least $\sum_{i=1}^{n}Y_i$ cars attempting to park in these spaces. So, if $C_n$ is negative for some $n$ then $(Y_i)_{i\geq 0}$ does not define a parking function on $S$. Conversely, if we don't have a parking function for $S$, then there is a minimal $n$ such that the cars starting their journey on $S$ cannot all park on that initial segment of the path, resulting in $C_n<0$. This tells us that $\mathds{P}(A_\alpha) = \mathds{P}(C_n \geq 0 \text{ for all } n \in \mathds{N}).$

We are almost ready to prove Theorem \ref{Thm: prob of A_alpha}. First, we state a lemma from \cite{skip} that will be useful in our proof:
\begin{lemma}\label{RW}
    Let $(R_n)_{n \geq 1}$ be a random walk with i.i.d step sizes $W_1, W_2, \ldots$ such that $\mathds{P}(W_1 \leq 1)=1, \mathds{E}[W_1]=m \geq 0, \mathds{P}(W_1 = 1)=q>0$. Then
    \[\mathds{P}(R_n \geq 0 \text{ for all } n \in \mathds{N}) = \frac{m}{q}.\]
\end{lemma}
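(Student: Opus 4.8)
To prove Lemma~\ref{RW} the plan is to establish the equivalent statement $\mathds{P}(R_n \geq 0 \text{ for all } n \geq 1) = m/q$ by a ladder/renewal analysis exploiting $W_1 \leq 1$: taking the steps to be integer-valued, as they are in our application, $W_1 \leq 1$ says exactly that $(R_n)$ is \emph{skip-free to the right} — it cannot jump over a level on the way up. Two reductions come first. Since $W_1 \leq 1$ and $\mathds{P}(W_1 = 1) = q > 0$ we have $\mathds{E}[W_1^{+}] \leq 1$, so $\mathds{E}[W_1^{-}] = \mathds{E}[W_1^{+}] - m < \infty$ and $m \in [0,1]$ is a genuine mean; and if $m = 0$ the walk is recurrent by Chung--Fuchs, and since $1$ lies in the support of its step law ($q>0$) it visits $-1$ almost surely, so the probability is $0 = m/q$. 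Hence assume $m > 0$, so that $R_n \to \infty$ almost surely.

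For $m>0$ I would assemble two ingredients. First, for $j \geq 0$ put $F_j := \inf\{n : R_n \geq j\}$; skip-freeness gives $R_{F_j} = j$ (no overshoot), the increments $F_j - F_{j-1}$ are i.i.d.\ copies of $F_1$ by the strong Markov property, and since a random walk with positive drift has first-passage time to level $1$ of finite mean (a standard fact), Wald's identity gives $\mathds{E}[F_1] = 1/m$, hence $\mathds{E}[F_J] = J/m$. Second, skip-freeness together with $R_n \to \infty$ forces the walk to visit every level $j \geq 0$ almost surely, so by the strong Markov property the Green's function $G(j) := \mathds{E}[\#\{n \geq 0 : R_n = j\}]$ equals $1/(1-\rho)$ for every $j \geq 0$, where $\rho := \mathds{P}(R_n = 0 \text{ for some } n \geq 1)<1$; for $j < 0$ the same decomposition gives $G(j) = \mathds{P}(\text{walk reaches } j)\,G(0)$.

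The crux is the identity $G(0) = 1/m$. On $[0, F_J)$ the walk is confined to levels $\leq J-1$, so $F_J = \#\{n < F_J : 0 \leq R_n \leq J-1\} + \#\{n < F_J : R_n < 0\}$. Taking expectations and, for each level $j \leq J-1$, writing the visits to $j$ before $F_J$ as the total number of visits minus the visits made after $F_J$ — the latter computed by the strong Markov property at $F_J$, where $R_{F_J}=J$, in terms of the probability of subsequently descending $J-j$ units — one finds after a telescoping cancellation that $\mathds{E}[F_J] = J\,G(0)$. Comparing with $\mathds{E}[F_J] = J/m$ gives $G(0) = 1/m$, i.e.\ $1 - \rho = m$.

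Finally I would convert this into the answer. Because the walk tends to $+\infty$ and is skip-free to the right, any time it is $\leq -1$ it must later pass through $-1$ on the way back up, so $\{R_n \geq 0 \text{ for all } n \geq 1\}^{c} = \{\text{the walk ever reaches } -1\}$ and the answer is $1 - r$ with $r := \mathds{P}(\text{walk reaches } -1)$; here $r < 1$ (if $r=1$ the walk would reach every negative level, contradicting $R_n\to\infty$), and if $r = 0$ then $W_1 \in \{0,1\}$ a.s., so $m = q$ and the answer is $1 = m/q$. Otherwise, count crossings of the boundary between $\{R_n \geq 0\}$ and $\{R_n \leq -1\}$: an up-crossing at time $n$ forces $R_{n-1} = -1$ and $W_n = 1$, so the expected number of up-crossings is $q\,G(-1) = qr/m$ (using $G(-1) = r\,G(0)$ and $G(0) = 1/m$), while the number $D$ of down-crossings satisfies $\mathds{P}(D \geq k) = r^{k}$ by the strong Markov property at the successive returns to $0$, whence $\mathds{E}[D] = r/(1-r)$; since the path (with $R_0=0$) starts and eventually stays in $\{R_n \geq 0\}$ these two counts are almost surely equal, so $r/(1-r) = qr/m$ and therefore $1 - r = m/q$. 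The step I expect to be the main obstacle is the identity $G(0) = 1/m$: morally it is the renewal fact that a transient walk of drift $m$ spends $1/m$ units of time per unit of height, but because we assume only that $W_1$ is integrable — its negative tail may be heavy — the usual renewal theorem is not directly available, and it is precisely the exact cancellation of the negative-level contributions in the $[0,F_J)$ decomposition that makes the computation go through with no tail hypothesis.
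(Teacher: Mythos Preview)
The paper does not prove this lemma at all: it is quoted from the reference \cite{skip} and used as a black box in the proof of Theorem~\ref{Thm: prob of A_alpha}. So there is no ``paper's own proof'' to compare against; you have supplied a self-contained argument where the paper simply cites one.

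Your argument is essentially correct for integer-valued steps, which you rightly note is all that is needed for the application ($W_i = 1 - Y_i$). A few remarks on the details. The telescoping step in the derivation of $G(0)=1/m$ deserves one more line of justification: when you sum $G(j)-G(j-J)$ over $j<0$ you are computing $G(0)\sum_{k\ge 1}(p_k-p_{k+J})$ with $p_k=\mathds{P}(\text{reach }-k)$, and the series need not be absolutely summable. However each summand is non-negative (reaching $-(k+J)$ forces a later visit to $-k$ by upward skip-freeness), the partial sums equal $\sum_{k=1}^{J}p_k-\sum_{k=N+1}^{N+J}p_k$, and $p_k\to 0$ because $\min_n R_n>-\infty$ a.s.\ when $R_n\to\infty$; so the telescope is legitimate. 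Your handling of the edge cases $m=0$ (Chung--Fuchs recurrence) and $r=0$ (forcing $W_1\in\{0,1\}$) is clean, and the up/down-crossing count is a nice way to close. The only caveat is that your proof, as written, genuinely uses the lattice structure (visits to the exact level $-1$, equality $R_{F_j}=j$, etc.), so it establishes the lemma under the additional hypothesis that $W_1$ is integer-valued; the statement as phrased in the paper does not assume this, but the application does.
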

We now prove Theorem \ref{Thm: prob of A_alpha}.
\begin{proof}[Proof of Theorem \ref{Thm: prob of A_alpha}]
    Taking expectations in (\ref{RDE for Y2}),
    \begin{align*}
        \mathds{E}[Y] &= \mathds{E}[P] + \mathds{E}\left[\Tilde{N}\right]\cdot\mathds{E} \left[(X-1)^+\right] \\
        &= \alpha + \frac{1}{2} (\mathds{E}[X]-(1-p)) \\
        &= \begin{cases}
        1 - \frac{1}{2}\sqrt{ 2(\alpha^2 - 4\alpha +2) } & \text{ if } 0 \leq \alpha \leq 2 - \sqrt{2} \\
        \infty & \text{ if } \alpha > 2 - \sqrt{2}
    \end{cases}
    \end{align*}
    since $\mathds{E} \left[(X-1)^+\right] = \mathds{E}[X]-(1-p)$ and $p=1-\alpha$ when $\alpha \leq 2 - \sqrt{2}$, and using Lemma (\ref{p > 1-alpha, E[X] = infty}) for $\alpha > 2 - \sqrt{2}$. Independence of the random variables in the RDE (\ref{RDE for X}) gives
    \[
    \mathds{P}(X=0) = \mathds{P}(P=0)\cdot\mathds{P}\left(\sum_{i=1}^{N} (X_i-1)^+=0\right)
    \]
    so that
    \begin{align}\label{P(X=0)}
        \mathds{P}\left(\sum_{i=1}^{N} (X_i-1)^+=0\right) = pe^\alpha.
    \end{align}
    Similarly, (\ref{RDE for Y2}) leads to
    \begin{equation}
        \begin{aligned}[b]\label{P(Y=0)}
        \mathds{P}(Y=0) &= \mathds{P}(P=0)\cdot\mathds{P}\left(\sum_{i=1}^{\Tilde{N}} (X_i-1)^+=0\right) \\
        &= e^{-\alpha}\cdot\mathds{P}\left(\sum_{i=1}^{\Tilde{N}} (X_i-1)^+=0\right)
        \end{aligned}    
    \end{equation}
    We may condition on the value $\Tilde{N}$ takes, as follows:
    \begin{equation}\label{Sum1}
        \begin{aligned}[b]            \mathds{P}\left(\sum_{i=1}^{\Tilde{N}} (X_i-1)^+=0\right) &= \sum_{n=0}^1 \mathds{P}\left(\sum_{i=1}^{\Tilde{N}} (X_i-1)^+=0 \mid \Tilde{N}=n\right)\cdot\mathds{P}(\Tilde{N}=n) \\
        &= \sum_{n=0}^1 \mathds{P}\left( (X-1)^+=0\right)^n \cdot\mathds{P}(\Tilde{N}=n) \\
        &= G_{\tilde{N}}(\rho) \\
        &= \frac{1}{2} \left(1 + \rho \right),
        \end{aligned}
    \end{equation}
    where $\rho = \mathds{P}\left( (X-1)^+=0\right)$, $G_{\tilde{N}}(\rho)$ is the pgf of $\tilde{N}$ evaluated at $\rho$, and by using the fact that $\sum_{i=1}^{n} (X_i-1)^+=0$ if and only if $(X_i-1)^+=0$ for all $i = 1, \ldots, n$, along with the independence of the $X_i$. Similarly, we can condition on the value of $N$:
    \begin{equation}\label{Sum2}
        \begin{aligned}[b]
            \mathds{P}\left(\sum_{i=1}^{N} (X_i-1)^+=0\right) &= G_{N}(\rho) = \frac{1}{4} \left(1 + \rho \right)^2
        \end{aligned}
    \end{equation} 
    where $G_N(\rho)$ is the pgf of $N$ evaluated at $\rho$. Combining (\ref{P(X=0)}) and (\ref{Sum2}) gives $\rho = -1 + 2p^{\frac{1}{2}}e^{\frac{\alpha}{2}}$. Substituting this into (\ref{Sum1}) then (\ref{P(Y=0)}) leads to $\mathds{P}(Y=0) = p^{\frac{1}{2}}e^{-\frac{\alpha}{2}}$.
    If $\alpha \leq 2 - \sqrt{2}$, then $p=1-\alpha$ and we may now apply Lemma (\ref{RW}) to give
    \[\mathds{P}(A_\alpha) = \mathds{P}(C_n \geq 0 \text{ for all } n \in \mathds{N}) = \frac{\mathds{E}[1-Y_n]}{\mathds{P}(1-Y_n=1)} = e^{\frac{\alpha}{2}}\sqrt{\frac{\alpha^2 - 4\alpha + 2}{2(1-\alpha)}}.\]

    Finally, for $\alpha > 2 - \sqrt{2}$, by the stochastic monotonicity in $\alpha$, we obtain
    \[0 \leq \mathds{P}(A_\alpha) = \mathds{P}(C_n \geq 0 \text{ for all } n \in \mathds{N}) \leq \inf_{\alpha \in (0, 2 - \sqrt{2}]} e^{\frac{\alpha}{2}}\sqrt{\frac{\alpha^2 - 4\alpha + 2}{2(1-\alpha)}} = 0,\]
    giving $\mathds{P}(A_\alpha) = 0$.
\end{proof}
\begin{remark}
    We see that $\mathds{P}(A_\alpha)$ is continuous for all $\alpha\in[0,1]$ and that $\mathds{P}(A_0)=1$, as expected.
\end{remark}

\section{Acknowledgements}
I would like to thank Christina Goldschmidt for introducing me to parking functions and supporting me through my undergraduate dissertation that inspired this work. I am also grateful to them for providing useful feedback of a first draft of this paper.

\bibliography{main}

\bibliographystyle{ieeetr}

\end{document}